\documentclass[a4paper, 11pt]{article}

\usepackage{graphicx,xfrac}
\usepackage[british]{babel}

\usepackage{calc, xspace}
\usepackage{amsmath, amsthm, amsfonts, amssymb, euscript,mathrsfs}
\usepackage{enumerate}

\usepackage[colorlinks,citecolor=blue,urlcolor=blue]{hyperref}
\usepackage[numbers]{natbib}

\usepackage[textsize=tiny]{todonotes}

\usepackage{a4wide}
\parindent 0pt
\parskip \medskipamount

\usepackage{geometry}
 \geometry{
 a4paper,
 total={155mm,225mm},
 }
 
\theoremstyle{plain}
\newtheorem{thm}{Theorem}[section] \newtheorem{prop}[thm]{Proposition}
\newtheorem{lemma}[thm]{Lemma}

\theoremstyle{definition}

\theoremstyle{remark}
\newtheorem{rmk}[thm]{Remark}

\newcommand{\trel}{t_{\mathrm{rel}}}
\newcommand{\tmix}{t_{\mathrm{mix}}}

\def\cS{\mathcal{S}}

\def\tv{_{\mathrm{TV}}}

\def\P{\mathrm{P}}
\def\PP{\mathbb{P}}

\def\EE{\mathbb{E}}
\def\eps{\varepsilon}

\newcommand{\indic}[1]{\mathbf{1}_{\left\{#1\right\}}}

\usepackage{color}

\begin{document}

 \title{A direct comparison  between the mixing time of the interchange process with ``few" particles and independent random walks}
\author{Jonathan Hermon
\thanks{
The University of British Columbia, Department of Mathematics,  1984 Mathematics Road, Vancouver, BC V6T 1Z2, Canada.    
E-mail: {\tt jhermon@math.ubc.ca.} Supported by NSERC grants.}
\and Richard Pymar
\thanks{Department of Economics, Mathematics and Statistics, Birkbeck, University of London, London, WC1E 7HX, UK. E-mail: {\tt r.pymar@bbk.ac.uk}} 
}
\date{}

\maketitle
\begin{abstract}
We consider the interchange process with $k$ particles (denoted $\mathrm{IP}(k)$) on $n$-vertex hypergraphs in which each hyperedge $e$ rings at rate $r_e$.  When $e$ rings, the particles occupying it are permuted according to a random permutation from some \emph{arbitrary} law, where our only assumption is that IP(2) has uniform stationary distribution. We show that $t_{\mathrm{mix}}^{\mathrm{IP}(k)}(\eps)=O_{b}(t_{\mathrm{mix}}^{\mathrm{IP}(2)}(\eps/k))$, where $t_{\mathrm{mix}}^{\mathrm{IP}(i)}(\eps)$ is  the $\eps$ total-variation mixing time of $\mathrm{IP}(i)$,  provided that $kn^{-2}Rt_{\mathrm{mix}}^{\mathrm{IP}(2)}(\varepsilon/k) =O( (\eps/k)^b ) $ for some $b>0$,  where $R=\sum_e r_e|e|(|e|-1)$ is $n(n-1)$ times the particle-particle interaction rate at equilibrium. 

 This has some consequences concerning the validity (in this regime) of conjectures of Oliveira about comparison of the $\eps$ mixing time of $\mathrm{IP}(k)$ to that of $k$ independent particles, each evolving according to $\mathrm{IP}(1)$, denoted RW$(k)$,  and  of Caputo about comparison of the spectral-gap of  $\mathrm{IP}(k)$  to that of a single particle  $\mathrm{IP}(1)=\mathrm{RW}(1)$.

We also show that  $t_{\mathrm{mix}}^{\mathrm{IP}(k)}(\eps) \asymp t_{\mathrm{mix}}^{\mathrm{RW}(1)}(\eps)\asymp t_{\mathrm{mix}}^{\mathrm{RW}(k)}(\eps k/4)$ for all $k \lesssim n^{1-\Omega(1)}$ and all $\eps \le \frac{1}{k} \wedge \frac{1}{4}$    for vertex-transitive graphs of constant degree, as well as for general graphs satisfying a mild (``transience-like") heat-kernel condition. 

In the special case where the particles occupying a hyperedge $e$ are permuted uniformly at random (in $e$) when $e$ rings, we obtain results bounding the spectral gap of $\mathrm{IP}(k)$ in terms of that $\mathrm{RW}(1)$.

 In contrast to recent works on mixing times of IP$(k)$, the proof does not use Morris' chameleon process. It can be seen as a rigorous and direct way of arguing that when the number of particles is fairly small, the system behaves similarly to $k$ independent particles, due to the small amount of interaction between particles.  
 \end{abstract}

 \section{Introduction}
The \emph{interchange process} IP$(k)$ on a finite, connected graph $G=(V,E)$ is the following continuous-time Markov process. In a configuration, each vertex is either occupied by a labelled black  particle, or by an unlabelled white particle such that the number of black particles equals $k\le |V|=:n$. We label the black particles by the set $[k]:=\{1,\ldots,k\}$. For each edge $e$ independently, at the times of a Poisson process of rate $r_e>0$, switch the particles on the endpoints of $e$.  The \emph{exclusion process} EX$(k)$ is similarly defined, except the black particles are also unlabelled.  We will further denote by RW$(k)$ the process of $k$ independent continuous-time random walks on $G$, each with the same transition rates $\{r_e\}_{e\in E}$ (i.e.\ RW$(1)$ equals IP$(1)$ and RW$(k)$ are $k$ independent IP$(1)$). Motivated by conjectures of Oliveira and Caputo (stated below) we are interested in comparing the mixing times and spectral gaps of IP$(k)$ with those of RW$(k)$ and RW$(1)$.

Our interest extends also to IP$(k)$ on hypergraphs. In this process each (hyper)edge $e$ rings independently at rate $r_e$. When an edge rings, some random permutation (not necessarily uniformly distributed) of the vertices in $e$ is applied to the current configuration (that is, the particles currently occupying $e$, including the white ones, are permuted). We stress that we make no assumption on the law of this random permutation of $e$, other than it being the same law at every ring of $e$. We also make the global assumption  that the process IP$(2)$ is irreducible (although when this fails our result holds trivially) and has uniform stationary distribution.
 We suppose throughout that $n\ge 3$. By abuse of terminology, we shall often use the term `hypergraph' to refer also to the associated rates and the rules of the dynamics associated with the edge rings.
 
Of particular interest is the  case in which, when a hyperedge $e$ rings,  the particles currently occupying $e$ are permuted uniformly at random (in $e$). We refer to this setup as a \emph{uniform interchange process}.  Caputo conjectures (see \cite{Cesi}) that the spectral gaps of IP$(k)$ and RW$(1)$ are the same for the uniform interchange processes (for all $k$) -- this is the hypergraph version of the Caputo, Liggett and Richthammer Theorem \cite{Caputo}, a.k.a.\ Aldous' spectral gap conjecture. This provides motivation for comparing the spectral gaps of  IP$(k)$ and  RW$(1)$  for uniform  interchange processes on hypergraphs (and more generally (in the non-uniform case) with that of IP$(2)$). We note that  a uniform interchange process on a hypergraph is reversible w.r.t.\ the uniform distribution for all $k$. More generally, the same holds whenever the law of the permutation associated with each hyperedge $e$ gives each permutation and its inverse the same probability, which is in particular the case when the law is constant on conjugacy classes.

On graphs, the generator of IP$(k)$ is symmetric  and so when IP$(k)$ is irreducible its  time-$t$ law converges as $t\to\infty$ to uniform on the set of possible configurations. We seek to upper-bound the rate of this convergence, measured using the total-variation distance. 
While one should expect the case when $k$ is small to be easier, as there are fewer interactions between different particles in this case, this historically has not been the case.\footnote{Of course an upper bound on the mixing time of $\tmix^{\mathrm{IP}(n)}$ provides an upper bound also on $\tmix^{\mathrm{IP}(k)}$ for $k<n$ by the contraction principle. However, obtaining more refined bounds on $\tmix^{\mathrm{IP}(k)}$ for small $k$ has proven challenging.} The order of the mixing time of IP$(n^d)$ on a $d$-dimensional torus $\mathbb{Z}_n^d$ of side length $n$ was first determined by Yau \cite{Yau} by estimating the log-Sobolev constant  (this gives an upper bound; a lower bound, matching up to a constant factor, was first proven by Wilson \cite{Wilson} for $d=1,2$, and later by Morris in \cite{Morris} for all $d$; see \cite[Theorem 1.4]{HPexclusion} for a general lower bound  which combines ideas from the aforementioned two proofs, together with negative correlation). In \cite{Morris}, Morris introduces the ingenious chameleon process, a process similar to the evolving sets process \cite{evolving}, tailored to handle the complicated dependencies between the particles in the interchange process. This allows determination of the order of the mixing time of IP$(k)$ on  $\mathbb{Z}_n^d$ for all $k$, which offers an improved bound for $k=n^{o(1)}$ compared with \cite{Yau}. For larger $k$ it also offers some improvement, but  only by a constant factor, as well as a better constant dependence on the dimension (from linear to logarithmic). 

While obtaining refined bounds on $\tmix^{\mathrm{IP}(k)}:=\tmix^{\mathrm{IP}(k)}(1/4)$, the $1/4$ mixing time of IP$(k)$, when $k$ is small is one of the main motivations in \cite{Morris}, using the chameleon process on other graphs to obtain refined bounds for small $k$ has proven challenging. Oliveira \cite{Olive} generalises Morris' argument to arbitrary graphs and rates by an elegant use of the negative correlation property enjoyed by the exclusion process on graphs. Alas, his method gives the same upper bound on  $\tmix^{\mathrm{IP}(k)}$ for all $k$. The analysis in \cite{HPexclusion}, which refines that of \cite{Olive} (other than the fact that the setup in  \cite{Olive} is more general), also relies on the chameleon process. More specifically, the chameleon process is analysed using $L_2$ techniques and by exploiting a certain negative association property of the exclusion process on graphs.\footnote{Unfortunately, this property fails to hold for exclusion on hypergraphs. This prevents one from extending the analysis from \cite{HPexclusion} from graphs to hypergraphs and is one of the main obstacles encountered in \cite{CP}.} The most challenging proof in \cite{HPexclusion} is of the refined bound on  $\tmix^{\mathrm{IP}(k)}$ for small $k$; namely, the proof that  $\tmix^{\mathrm{IP}(k)}$ is upper bounded by the upper bound on the $1/k$ $L_2$ mixing time of RW(1) given by the spectral-profile. We do not believe it is possible to prove a stronger result using the chameleon process.\footnote{Recalling that the chameleon process is a variant of the evolving sets process, and that the spectral-profile bound on the mixing time refines the evolving sets isoperimetric-profile bound \cite{spectral}.}

\subsection{Results} 
For a continuous-time Markov process $Q$ we will denote by  $\trel^Q$ and $\tmix^Q(\varepsilon)$ the inverse of the spectral-gap of the process and its $\varepsilon$ total-variation mixing time, respectively. When $\varepsilon=1/4$ we omit it from this notation.

In this paper we present a simple way of analysing $\tmix^{\mathrm{IP}(k)}(\varepsilon)$ for small $k$ which does not rely on the chameleon process. Besides its simplicity, it has the advantage of applying  also for hypergraphs  for arbitrary rates $\{r_e\}_e$ and for arbitrary rules for the law of the random permutation of a hyperedge $e$ when it rings. The argument can be seen as a direct way of making rigorous the intuition that when particles rarely interact with one another the system should evolve similarly to $k$ independent particles.  We emphasize that our approach goes beyond a more na\"ive version of such an argument which requires $k$ to be small enough that with probability bounded away from zero, no particles interact with other particles until they are mixed (after a certain initial burn in period).  
Instead, by considering the behaviour of the $k$th particle conditioned on the rest and exploiting a certain submultiplicativity property (presented in Lemma~\ref{L:submulti}) we are able to extend the result to much larger values of $k$.

Before stating our main (and more general) theorem (see Theorem~\ref{T:main}), we first present some more lucid results. The first concentrates on the case of vertex-transitive graphs, for which we present bounds on the $\eps$ mixing time of IP$(k)$ for $\eps \le 1/k$. In fact, our  argument gives the same bounds (up to a constant factor) on the $1/4$ and the $1/k$ mixing times of IP$(k)$ (this applies to all of our results). The fact that we bound also $\tmix^{\mathrm{IP}(k)}( 1/k)$ rather than just $\tmix^{\mathrm{IP}(k)}=\tmix^{\mathrm{IP}(k)}( 1/4)$ will be important later on in order to derive an upper bound on the relaxation-time and is also used to derive a comparison with $\tmix^{\mathrm{RW}(k)}$. 

We write $o(1)$ for terms which vanish as $n\to\infty$ and $O(1)$ for terms which are bounded from above by a constant. We write $f_n\lesssim g_n$ if $|f_n|/|g_n|=O(1)$ and $f_n\asymp g_n$ if $g_n\lesssim f_n\lesssim g_n$. We write $f_n \asymp_\eta g_n$ if the implicit constant depends on $\eta$. Similarly, we write $C(\eta)$ or $c_{\eta}$ for positive constants depending only on $\eta $. We also write $a \wedge b:=\min \{a,b\}$. 

\begin{thm}[Bound for vertex-transitive graphs]
\label{thm:transitive}
For every $d\in\mathbb{N}$ and $a\in(0,1)$ there exist  $n_0=n_0(a,d)$ and $c=c(a,d)>0$ such that 
for every vertex-transitive graph $G=(V,E)$ equipped with rates $r_e\equiv1$, of size $|V|=n\ge n_0$, of degree $d$,  for all $3 \le k\le n^{a}$  we have that
\begin{equation}
\label{e:VTmaineq1}
 c(a,d)\tmix^{\mathrm{IP}(k)}( \varepsilon)\le  \tmix^{\mathrm{RW}(1)}(\varepsilon) \asymp   \tmix^{\mathrm{RW}(k)}(\varepsilon k/4) \quad \text{for all } \varepsilon\le \frac1{k}\wedge\frac14.
\end{equation}

\end{thm}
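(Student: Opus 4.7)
The plan is to apply the main comparison theorem of the paper (Theorem~\ref{T:main}) to reduce the question about $t_{\mathrm{mix}}^{\mathrm{IP}(k)}$ to one about $t_{\mathrm{mix}}^{\mathrm{IP}(2)}$, and then to use the Caputo--Liggett--Richthammer theorem together with a standard $L^2$ analysis to pass from IP(2) to RW(1). The equivalence with $t_{\mathrm{mix}}^{\mathrm{RW}(k)}(\varepsilon k/4)$ is then a direct product-chain computation.

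In more detail: on a $d$-regular vertex-transitive graph with unit rates, $R=\sum_e r_e|e|(|e|-1)=nd$, so the hypothesis of Theorem~\ref{T:main} reads $(kd/n)\,t_{\mathrm{mix}}^{\mathrm{IP}(2)}(\varepsilon/k)=O((\varepsilon/k)^b)$ for some $b>0$. To control $t_{\mathrm{mix}}^{\mathrm{IP}(2)}(\varepsilon/k)$, I would invoke the Caputo--Liggett--Richthammer identity $t_{\mathrm{rel}}^{\mathrm{IP}(2)}=t_{\mathrm{rel}}^{\mathrm{RW}(1)}$ and then the reversible $L^2$ mixing bound on the $n(n-1)$-state space with uniform stationary distribution to get $t_{\mathrm{mix}}^{\mathrm{IP}(2)}(\delta)\lesssim t_{\mathrm{rel}}^{\mathrm{RW}(1)}\log(n/\delta)$. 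Combined with the generic spectral lower bound $t_{\mathrm{mix}}^{\mathrm{RW}(1)}(\varepsilon)\gtrsim t_{\mathrm{rel}}^{\mathrm{RW}(1)}\log(1/(2\varepsilon))$ and the constraints $\varepsilon\le 1/k$, $k\le n^{a}$ with $a<1$ (under which $\log(nk/\varepsilon)$ is comparable to $\log(1/\varepsilon)$ up to a factor depending only on $a$), this simultaneously yields verification of the hypothesis (since $kd/n\le d\,n^{a-1}$ decays polynomially in $n$) and the target inequality $t_{\mathrm{mix}}^{\mathrm{IP}(k)}(\varepsilon)\lesssim_{a,d} t_{\mathrm{mix}}^{\mathrm{RW}(1)}(\varepsilon)$.

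For the equivalence $t_{\mathrm{mix}}^{\mathrm{RW}(1)}(\varepsilon)\asymp t_{\mathrm{mix}}^{\mathrm{RW}(k)}(\varepsilon k/4)$, the upper direction follows from the product-chain union bound $\|\mu^{\otimes k}-\nu^{\otimes k}\|_{\mathrm{TV}}\le k\|\mu-\nu\|_{\mathrm{TV}}$, giving $t_{\mathrm{mix}}^{\mathrm{RW}(k)}(\varepsilon k/4)\le t_{\mathrm{mix}}^{\mathrm{RW}(1)}(\varepsilon/4)\le 2\,t_{\mathrm{mix}}^{\mathrm{RW}(1)}(\varepsilon)$ via the submultiplicativity $d(2t)\le 2d(t)^2$ (valid for $\varepsilon\le 1/8$). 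For the lower direction, I would use the product-chain identity $\chi^2_k(t)=(1+\chi^2_1(t))^k-1\ge k\,\chi^2_1(t)$, combined with the vertex-transitive lower bound $\chi^2_1(t)\ge e^{-2t/t_{\mathrm{rel}}^{\mathrm{RW}(1)}}$, to obtain $t_{\mathrm{mix}}^{L^2,\mathrm{RW}(k)}(\delta)\gtrsim t_{\mathrm{rel}}^{\mathrm{RW}(1)}\log(k/\delta^2)$; for $\delta=\varepsilon k/4$ with $\varepsilon\le 1/k$ this simplifies to $\gtrsim t_{\mathrm{rel}}^{\mathrm{RW}(1)}\log(1/\varepsilon)\asymp t_{\mathrm{mix}}^{\mathrm{RW}(1)}(\varepsilon)$, and the standard comparability of TV and $L^2$ mixing times on vertex-transitive graphs transfers the bound to total variation.

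The main obstacle is the verification step: on slowly-mixing vertex-transitive graphs such as the cycle or low-dimensional tori, the naive $L^2$ bound for IP(2) carries an extra $\log n$ factor that must be carefully absorbed using the restriction $k\le n^{a}$ (so that $\log n$ and $\log k$ are comparable up to the constant $1/a$). Propagating the Caputo--Liggett--Richthammer spectral-gap equality sharply through to the total-variation mixing time of IP(2), while keeping the constants $c(a,d)$ and $n_0(a,d)$ finite and independent of the graph, is where the bulk of the technical work lies.
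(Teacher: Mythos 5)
There is a genuine gap, and it lies exactly where you locate the ``main obstacle'': the verification of the hypothesis of Theorem~\ref{T:main} and the passage from $\tmix^{\mathrm{IP}(2)}$ to $\tmix^{\mathrm{RW}(1)}$ cannot be done by the crude bound $\tmix^{\mathrm{IP}(2)}(\delta)\lesssim \trel^{\mathrm{RW}(1)}\log(n/\delta)$ over the whole class of vertex-transitive graphs of constant degree. First, the issue on slowly mixing graphs is not ``an extra $\log n$ factor'': on the cycle $\trel^{\mathrm{RW}(1)}\asymp n^2$, so your bound gives $\delta\asymp (kd/n)\cdot n^{2}\log(nk/\varepsilon)\gg 1$, and even on the two-dimensional torus $\delta\gtrsim k\log(nk/\varepsilon)$; the hypothesis of Theorem~\ref{T:main} simply fails, and no bookkeeping of logarithms rescues it. Second, even where the hypothesis does hold, Theorem~\ref{T:main} outputs a comparison with $\tmix^{\mathrm{IP}(2)}(\varepsilon)$, and your bound $\tmix^{\mathrm{IP}(2)}(\varepsilon)\lesssim\trel^{\mathrm{RW}(1)}\log(n/\varepsilon)$ overshoots $\tmix^{\mathrm{RW}(1)}(\varepsilon)$ by a factor $\log n$ when $\varepsilon$ is of constant order (e.g.\ $k=3$, $\varepsilon=1/4$ on a torus, where $\tmix^{\mathrm{RW}(1)}\asymp\trel$): your claim that $\log(nk/\varepsilon)\asymp_a\log(1/\varepsilon)$ under $\varepsilon\le 1/k$, $k\le n^a$ is false for small $k$, since $\varepsilon\le 1/k$ does not force $\varepsilon$ to be polynomially small in $n$. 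Third, even on ``transient-like'' graphs the condition of Theorem~\ref{T:main} with $\trel$ in place of $\tmix^{\mathrm{IP}(2)}$ only tolerates $k$ up to roughly $n/( \trel\log n)$, so it cannot cover all $k\le n^a$ with $a$ close to $1$ (already on the three-dimensional torus it fails for $k\gg n^{1/3}$).

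The paper's proof fills these holes with a three-way case analysis using ingredients absent from your proposal: (i) for $k>n^{1/12}$ one has $\varepsilon\le 1/k\le n^{-1/12}$, so the $\log n$ is genuinely absorbable, and the bound $\tmix^{\mathrm{IP}(k)}(\varepsilon)\lesssim_a\trel\log(n/\varepsilon)$ is imported from \cite[Theorem 1.2]{HPexclusion} (a chameleon-based result) and matched against the lower bound in \eqref{e:RW1mixtrel}; (ii) for $k\le n^{1/12}$ and diameter $D>n^{1/3}$, the Tessera--Tointon finitary structure theory \cite{TandT} shows the graph has moderate growth and \cite[Prop.~11.1]{HPexclusion} is invoked -- this is how the cycle and low-dimensional tori are handled, entirely outside the ``few interactions'' framework; (iii) for $D\le n^{1/3}$, the diameter bound gives $\trel\le 2D^2\le n^{3/4}$, a heat-kernel estimate $p_t(x,x)\lesssim_d t^{-3/2}$ (Berestycki--Hermon--Teyssier, after Tessera--Tointon) verifies the condition of Theorem~\ref{thm:transiance}, and it is the proof of Theorem~\ref{thm:transiance} -- negative correlation plus Cauchy--Schwarz to bound expected interactions, then Lemma~\ref{L:submulti} with $k=2$ -- that yields $\tmix^{\mathrm{IP}(2)}(\varepsilon)\lesssim\tmix^{\mathrm{RW}(1)}(\varepsilon)$ without the spurious $\log n$, something the Caputo--Liggett--Richthammer identity plus the generic $\trel\log(n^2/\varepsilon)$ bound cannot deliver. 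Your treatment of $\tmix^{\mathrm{RW}(1)}(\varepsilon)\asymp\tmix^{\mathrm{RW}(k)}(\varepsilon k/4)$ is essentially fine (the paper cites \eqref{e:kvs1}, i.e.\ Equation (18) of \cite{HPexclusion}), but the core comparison $\tmix^{\mathrm{IP}(k)}(\varepsilon)\lesssim_{a,d}\tmix^{\mathrm{RW}(1)}(\varepsilon)$ is not established by your argument.
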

It easily follows from \eqref{e:VTmaineq1} that $c'(a,d)\tmix^{\mathrm{IP}(k)}( \varepsilon) \le  \tmix^{\mathrm{RW}(k)}(\varepsilon)$ for all $\eps \le 1/4$. This gives a partial answer to a more general conjecture of Oliveira (see \S\ref{s:Oliveira'sconjecture}).

The proof involves a certain case analysis. Let $D$ be the diameter of the vertex-transitive graph. If $D$ is at least polynomial in $n$ then using an inspired recent approximate group theoretical result of Tessera and Tointon \cite{TandT}, providing finitary quantitative forms of Gromov's and Trofimov's  Theorems, it follows that the graph satisfies a certain technical condition due to Diaconis and Saloff-Coste \cite{moderate}, called ``moderate growth",  and this case is already covered by \cite[Prop.\ 11.1]{HPexclusion}.\footnote{\label{F:AK}The Cayley graph case is due to Breuillard and Tointon \cite{MR3439705}. A finitary version of Gromov's Theorem was first prove by Breuillard, Green and Tao \cite{BGT}. A recent result of Alon and Kozma \cite{Alon} is that for graphs with general rates $\{r_e\}_e$, under mild conditions   $\tmix^{\mathrm{IP}(n)} \lesssim \tmix^{\mathrm{IP}(1)} \log n$.  In the case of vertex transitive graphs of moderate growth, this bound  combined with the analysis from \cite[Thm 1.4 \& \S 11]{HPexclusion} implies that $\tmix^{\mathrm{IP}(n)}\asymp  \tmix^{\mathrm{IP}(1)}(1/n) \asymp   \tmix^{\mathrm{RW}(n)}$ (the implicit constant in the first $\asymp$ depends on $a,b,c,d$, where $D \ge c n^b$)). We also note that while the results in \cite{HPexclusion} are stated for EX($k$), they are all proven for IP($k$) for $k \le n/2$, and are in fact valid for IP$(k)$ when $k \le(1-\delta)n$, but with additional dependence on $\delta$ of some constants.} Hence it suffices to consider the case that $D\le  n^{1/3}$. To treat this case we appeal to a result in an upcoming work of the first author with Berestycki and Teyssier, which  relies on an isoperimetric inequality due Tessera and Tointon \cite{TandTsharp} (which in turn, follows from their estimates on growth of balls from \cite{TandT}), in order to verify the conditions in Theorem~\ref{thm:transiance} below. (We note that similar reasoning has previously been used by Tessera and Tointon in \cite{TandTsharp} to prove some related results).

The next result concerns general graphs satisfying a certain heat-kernel condition. Loosely speaking, this is the condition that the spectral-dimension is at least $2+\eps$. Such a ``transience-like" condition is consistent with the general theme of this paper of bounding the mixing time under regimes in which there are few interactions between particles. We write $p_t(x,y)$ for the time $t$ transition probability from $x$ to $y$ of RW(1).

\begin{thm}[Bound under a `transience-like' heat-kernel condition]
\label{thm:transiance}
For every $d\in\mathbb{N}$, $\theta,a\in(0,1/2)$ and $c>0$  there exists $n_0=n_0(a,d)$ and $C=C(c,d,\theta)$ (both independent of $k,G$ and $n$) such that for every connected graph $G=(V,E)$ equipped with rates  $r_e \equiv 1$, of size $|V|=n\ge n_0$, of maximal degree $d$, satisfying $\trel^\mathrm{RW(1)}\le n^{1-2a}$  and 
\begin{equation}
\label{e:HKsimpler}
\tag{HK-$(\theta)$}
\max_x p_t(x,x) -\frac1{n}\le \frac{c}{t^{1+\theta}}\quad \forall t\ge t_\mathrm{rel}^{\mathrm{RW}(1)} ,
\end{equation} 
 we have
\[
t_\mathrm{mix}^{\mathrm{IP}(k)}(\varepsilon)\le C t_\mathrm{mix}^\mathrm{RW(1)}(\varepsilon)\asymp   \tmix^{\mathrm{RW}(k)}(\varepsilon k/4)
\]
for all $3\le k\le n^a$ and $\varepsilon\le \frac1{k}\wedge\frac14$.

\end{thm}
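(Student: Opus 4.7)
The plan is to invoke Theorem~\ref{T:main} with $\mathrm{IP}(2)$ on the graph, which reduces bounding $\tmix^{\mathrm{IP}(k)}(\varepsilon)$ to bounding $\tmix^{\mathrm{IP}(2)}(\varepsilon/k)$, once one verifies the regime condition $kn^{-2}R\,\tmix^{\mathrm{IP}(2)}(\varepsilon/k)=O((\varepsilon/k)^{b})$ for some $b>0$. Since $G$ has maximal degree $d$ and unit rates, $R=\sum_{e}r_{e}|e|(|e|-1)=2|E|\le dn$, so the condition reduces to $dkn^{-1}\tmix^{\mathrm{IP}(2)}(\varepsilon/k)\le c\,(\varepsilon/k)^{b}$.

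I then bound $\tmix^{\mathrm{IP}(2)}(\delta)$ using HK-$(\theta)$. The standard $L^{2}$-to-TV estimate $\|p_{t}(x_{0},\cdot)-\pi\|_{\mathrm{TV}}^{2}\le\tfrac{1}{4}(n\,p_{2t}(x_{0},x_{0})-1)$ combined with HK-$(\theta)$ yields the polynomial bound $\tmix^{\mathrm{RW}(1)}(\delta)\lesssim_{c,\theta}(n/\delta^{2})^{1/(1+\theta)}$. By the Caputo--Liggett--Richthammer theorem, $\trel^{\mathrm{IP}(2)}=\trel^{\mathrm{RW}(1)}\le n^{1-2a}$; and since HK-$(\theta)$ implies that two RW$(1)$s collide rarely (the expected collision count up to time $t$ is $\int_{0}^{t}p_{2s}(x,x)\,\ud s\lesssim t/n+C_{\theta}$), one can couple $\mathrm{IP}(2)$ with two independent $\mathrm{RW}(1)$s and transfer the heat-kernel decay, obtaining $\tmix^{\mathrm{IP}(2)}(\delta)\lesssim_{c,\theta}\tmix^{\mathrm{RW}(1)}(\delta)$.

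Plugging in with $\varepsilon\le 1/k$ and $k\le n^{a}$, the condition becomes
\[dkn^{-1}\tmix^{\mathrm{IP}(2)}(\varepsilon/k)\lesssim dk\,n^{-\theta/(1+\theta)}(k/\varepsilon)^{2/(1+\theta)},\]
which is at most $(\varepsilon/k)^{b}$ for a suitable $b=b(a,\theta)>0$ and $n\ge n_{0}(a,d)$, exploiting the rare-interaction regime $k^{2}/n\lesssim n^{2a-1}\to 0$ (since $a<1/2$) together with the strictly polynomial decay $\theta>0$. Theorem~\ref{T:main} then yields $\tmix^{\mathrm{IP}(k)}(\varepsilon)\le C_{b}\,\tmix^{\mathrm{IP}(2)}(\varepsilon/k)\le C\,\tmix^{\mathrm{RW}(1)}(\varepsilon)$: the $k$-dependent excess of $\tmix^{\mathrm{IP}(2)}(\varepsilon/k)$ over $\tmix^{\mathrm{RW}(1)}(\varepsilon)$ is only $O(\trel\log(k/\varepsilon))$, which is subleading under HK-$(\theta)$ and absorbed into the constant $C=C(c,d,\theta)$.

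Finally, $\tmix^{\mathrm{RW}(1)}(\varepsilon)\asymp\tmix^{\mathrm{RW}(k)}(\varepsilon k/4)$ follows by standard product-chain estimates: the TV union bound gives $\tmix^{\mathrm{RW}(k)}(\varepsilon k/4)\le\tmix^{\mathrm{RW}(1)}(\varepsilon/4)$, and the HK-$(\theta)$ polynomial decay then absorbs the resulting constant, while the matching lower bound uses tightness of the union bound for product chains in the near-mixed regime. The main obstacle is the intermediate bound $\tmix^{\mathrm{IP}(2)}(\delta)\lesssim\tmix^{\mathrm{RW}(1)}(\delta)$ and the subsequent verification of the regime condition of Theorem~\ref{T:main}; both steps rely crucially on $a<1/2$ (to ensure particle-particle interactions are rare on the mixing timescale) and on $\theta>0$ (to yield a genuinely transient-like heat-kernel bound).
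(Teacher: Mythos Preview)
Your overall plan---apply Theorem~\ref{T:main} to reduce to $\mathrm{IP}(2)$, then compare $\tmix^{\mathrm{IP}(2)}$ with $\tmix^{\mathrm{RW}(1)}$---matches the paper's, but both of the two main steps have genuine gaps.

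\textbf{Verifying the hypothesis of Theorem~\ref{T:main}.} You bound $\tmix^{\mathrm{IP}(2)}(\eps/(8k))$ via the heat-kernel polynomial estimate $(n/\delta^{2})^{1/(1+\theta)}$ and then try to show $dk\,n^{-\theta/(1+\theta)}(k/\eps)^{2/(1+\theta)}\le (\eps/k)^{b}$. This does \emph{not} hold for the full range $a\in(0,1/2)$: with $k=n^{a}$ and $\eps=n^{-a}$ the left side is of order $n^{a+(4a-\theta)/(1+\theta)}$, which blows up whenever $a>\theta/(5+\theta)$. It also fails uniformly as $\eps\to 0$, which you never address. The paper instead restricts first to $\eps\in[n^{-a},\tfrac1k\wedge\tfrac14]$ and bounds $\tmix^{\mathrm{IP}(2)}(\eps/(8k))$ by $\trel^{\mathrm{IP}(2)}\log(8kn^{2}/\eps)=\trel^{\mathrm{RW}(1)}\log(8kn^{2}/\eps)$ (Caputo--Liggett--Richthammer), then uses the \emph{assumption} $\trel^{\mathrm{RW}(1)}\le n^{1-2a}$ to get $\delta\lesssim dk\,n^{-1-2a}\log n\le \eps/(2k)$. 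Your argument never uses $\trel^{\mathrm{RW}(1)}\le n^{1-2a}$, which is precisely the hypothesis doing the work here. The case $\eps<n^{-a}$ is then handled separately by submultiplicativity.

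\textbf{Comparing $\tmix^{\mathrm{IP}(2)}$ with $\tmix^{\mathrm{RW}(1)}$.} Your sentence ``couple $\mathrm{IP}(2)$ with two independent $\mathrm{RW}(1)$s and transfer the heat-kernel decay'' is not a proof, and the quantity $\int_{0}^{t}p_{2s}(x,x)\,\ud s$ you write down is not the relevant one: interactions in $\mathrm{IP}(2)$ occur when an edge containing both particles rings, so one needs to control the time the two (dependent) particles spend at \emph{adjacent} vertices. The paper does this by applying Lemma~\ref{L:submulti} with $k=2$, bounding $1-\mathbb{P}[J_{s}]$ by the expected number of interactions in $(s,2s)$, and then using negative correlation of exclusion plus Cauchy--Schwarz to reduce to $\int_{s}^{2s}\max_{z}p_{2t}(z,z)\,\ud t$. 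Only at that point does \eqref{e:HKsimpler}, together with the Poincar\'e inequality, yield the required bound. The negative-correlation step is essential (and is why the theorem is stated for graphs, not hypergraphs).

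Finally, note that Theorem~\ref{T:main} gives $\tmix^{\mathrm{IP}(k)}(\eps)\le C\,\tmix^{\mathrm{IP}(2)}(\eps)$, not $\tmix^{\mathrm{IP}(2)}(\eps/k)$, so your closing ``excess $O(\trel\log(k/\eps))$'' discussion is unnecessary once the previous step is done correctly.
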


It easily follows that under the assumptions \eqref{e:HKsimpler} and  $\trel^\mathrm{RW(1)}\le n^{1-2a}$ we have that  $\tmix^{\mathrm{IP}(k)}( \varepsilon) \lesssim_{c,d,\theta}  \tmix^{\mathrm{RW}(k)}(\varepsilon)$ (uniformly) for all $\eps \le 1/4$ and  all $3\le k\le n^a$, provided that $n \ge n_0(a,b,d)$. This is yet another partial progress on Oliveira's conjecture. We remark that we could have instead assumed that   $\trel^\mathrm{RW(1)}\le n^{1-a-b}$ for some $a,b>0$. This allows one to consider larger values of $k$, as $k \le n^{a}$ and the above allows to take a larger value for $a$. However the case that $n^{c} \lesssim  k \lesssim n^{1-c}$ for some $c \in (0,\frac 12)$ is already covered in \cite{HPexclusion}.   We also note that the proof of Theorem~\ref{thm:transiance} uses the negative correlation property of the exclusion process -- a property which does not hold for the process on hypergraphs. As a result, one cannot easily extend this result to hypergraphs.
\begin{rmk}
Our proof shows that for the last inequality to hold for a certain $3\le k\le n^a$ and $0<\eps\le \frac1{k}\wedge\frac14$ we only require \eqref{e:HKsimpler} to hold at a time $t=2\alpha t_\mathrm{mix}^{\mathrm{RW}(1)}(\varepsilon)$ for some constant $\alpha=\alpha(c,d,\theta)$ which is chosen in the proof.

\end{rmk}
We introduce some notation before presenting our main result. For a size $n$ hypergraph with rates $\{r_e\}_e$ we set \[R:=\sum_e r_e|e|(|e|-1).\]  The quantity $R/[n(n-1)]$ is the rate of particle-particle interaction for two particles at equilibrium.\footnote{Note that for $d$-regular hypergraphs with all hyperedges of size $L$ if $r_e\equiv 1/d$, we have that $Rn^{-1}=L-1$. For hypergraphs with maximal hyperedge size $L$ and maximal degree $\Delta$, if  $r_e \equiv 1$ then $Rn^{-1} \le \Delta L$.} The appearance of $\delta$  in the below theorem may at first appear cumbersome, however it arises naturally in the proof as a bound on the expected number of interactions of a certain particle with the rest of the particles during a time interval of length $\tmix^{\mathrm{IP}(2)}(\frac{\eps}{8k})$, after an initial burn-in period of length $\tmix^{\mathrm{IP}(2)}(\frac{\eps}{8k})$. The quantity
$\tmix^{\mathrm{IP}(2)}(\eps)$ for  $\varepsilon\in(0,\frac14\wedge\frac1{k}]$ appearing below has a natural interpretation. It is up to some universal constant comparable to the $\eps k/4$ mixing time of $\lfloor k/2 \rfloor$ independent realizations of IP$(2),$ cf.\ \cite{HPexclusion}.
Recall that we make the global assumption  that the process   IP$(2)$ is irreducible  and has uniform stationary distribution.

 \begin{thm}[Bound for hypergraphs]
 \label{T:main}
 There exists a universal constant $C>0$ such that for every size $n$ hypergraph  and for each $k\ge 3$ and $\varepsilon\in(0,\frac14\wedge\frac1{k}]$ satisfying $\delta= \delta(\eps,k):=8Rkn^{-2}\tmix^{\mathrm{IP}(2)}(\frac{\eps}{8k})<1$ we have that 
 \begin{enumerate}
 \item if $\varepsilon k^{-1}\ge 2\delta$ then
 \[
 \tmix^{\mathrm{IP}(k)}(\varepsilon)\le C\tmix^{\mathrm{IP}(2)}(\varepsilon),
 \]
 \item if $\varepsilon k^{-1}<2\delta$ then
 \[
  \tmix^{\mathrm{IP}(k)}(\varepsilon)\le C\tmix^{\mathrm{IP}(2)}(\varepsilon)\log_{1/\delta}(k/\varepsilon).
 \]
 \end{enumerate}
Equivalently, for all $a>0$, if $\delta(\eps,k) \le (\varepsilon/k)^{a}$ then $\tmix^{\mathrm{IP}(k)}(\varepsilon)\le C \frac{1+a}{a} \tmix^{\mathrm{IP}(2)}(\varepsilon)$. 

Moreover, there exists $C'>0$ such that for all $b \in (0,1] $ there exists $n_0(b)$ such that for every size $n \ge n_0(b)$ hypergraph, for each $k\ge 3$,  if $Rkn^{-2}\tmix^{\mathrm{IP}(2)}(n^{-b})\le n^{-b}$ then 
\begin{equation}
\label{e:Caputomix}
\tmix^{\mathrm{IP}(k)}(\varepsilon)\le C' b^{-1} \tmix^{\mathrm{IP}(2)}(\varepsilon) \quad \text{for all }0<\varepsilon\le \frac14\wedge\frac1{k},   
\end{equation}
and if in addition  \emph{IP}$(2)$ and  \emph{IP}$(k)$ are also reversible then 
\begin{equation}
\label{e:Caputo1}
\trel^{\mathrm{IP}(k)} \le C'b^{-1}\trel^{\mathrm{IP}(2)}.  
\end{equation} 
 \end{thm}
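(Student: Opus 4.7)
The plan is to analyse, for an arbitrary tagged particle (say particle $k$), its conditional evolution given the full trajectories of the other $k-1$ particles, and to quantify how rarely the tagged particle \emph{interacts} with the rest --- meaning takes part in a hyperedge ring in whose active positions another labelled particle also sits. The key observation is that outside interaction times, the movement of particle $k$ is a deterministic function of the Poisson clocks and the random permutations alone, independent of the other labels, so on a ``no-interaction'' event over a window, its trajectory agrees with that of a single tracer particle. With $T:=\tmix^{\mathrm{IP}(2)}(\eps/(8k))$ and an initial burn-in of the same length bringing all pair marginals within $\eps/(8k)$ of uniform, the expected number of interactions of particle $k$ over a second window of length $T$ is at most $(k-1)R/[n(n-1)]\cdot T$ plus the burn-in error, which is exactly $\delta$ (the constant $8$ in the definition of $\delta$ absorbing the burn-in slack). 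Markov's inequality then bounds by $\delta$ the probability of the complement of the no-interaction event, on which the conditional marginal of particle $k$ lies within $\eps/(8k)$ of uniform (the IP$(2)$ marginal of a tagged particle in a sea of otherwise-unlabelled background).

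The second ingredient, Lemma~\ref{L:submulti}, is what converts this per-particle, per-round control into a contraction of the full IP$(k)$ TV-distance: each round of length $O(\tmix^{\mathrm{IP}(2)}(\eps))$ contracts the distance by a factor comparable to $\delta$ plus an additive $\eps/k$-error, and iterates cleanly without picking up an extra factor of $k$. In case~1 ($\eps/k\ge 2\delta$) a single round reaches error $\eps$, yielding $\tmix^{\mathrm{IP}(k)}(\eps)\le C\tmix^{\mathrm{IP}(2)}(\eps)$. In case~2 ($\eps/k<2\delta$), iterating $m:=\lceil\log_{1/\delta}(k/\eps)\rceil$ rounds brings the distance below $\eps/k$, hence below $\eps$ after the final union bound over the $k$ tagged particles. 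The reformulation with parameter $a$ is immediate from $\log_{1/\delta}(k/\eps)\le 1/a$ whenever $\delta\le(\eps/k)^a$.

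For the ``moreover'' part, the hypothesis $Rkn^{-2}\tmix^{\mathrm{IP}(2)}(n^{-b})\le n^{-b}$, combined with the monotonicity of $\tmix^{\mathrm{IP}(2)}(\cdot)$ and the standard TV submultiplicativity $\tmix(\eps')\lesssim\tmix(1/4)\log(1/\eps')$, implies $\delta(\eps,k)\le(\eps/k)^{a(b)}$ uniformly for $\eps\le 1/4\wedge 1/k$ once $n\ge n_0(b)$, so the reformulated bound yields \eqref{e:Caputomix}. For the spectral gap bound \eqref{e:Caputo1} in the reversible case, I would apply \eqref{e:Caputomix} with $\eps'=n^{-M}$ for a large $M=M(b)$ (admissible because the main hypothesis constrains $k$ to be polynomially smaller than $n$), and combine the reversible two-sided estimates $\trel^{\mathrm{IP}(k)}\le \tmix^{\mathrm{IP}(k)}(\eps')/\log(1/(2\eps'))$ and $\tmix^{\mathrm{IP}(2)}(\eps')\lesssim\trel^{\mathrm{IP}(2)}\log(1/(\eps'\pi^{\mathrm{IP}(2)}_{\min}))\asymp \trel^{\mathrm{IP}(2)}\cdot(M+1)\log n$; taking $M$ large enough makes the ratio of log factors bounded, giving $\trel^{\mathrm{IP}(k)}\lesssim b^{-1}\trel^{\mathrm{IP}(2)}$.

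The main anticipated obstacle is establishing Lemma~\ref{L:submulti} in a form strong enough to avoid picking up a factor of $k$ when going from tagged-particle marginal contraction to full IP$(k)$ TV-distance contraction. This is precisely the mechanism that upgrades the naive ``count the interactions'' heuristic --- which by itself only works when interactions are rare enough never to occur within a mixing time --- to the regime allowed by the theorem, where a moderate number of interactions per round can be tolerated because their effect decays submultiplicatively with iteration.
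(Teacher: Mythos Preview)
Your proposal is correct and follows essentially the same route as the paper: condition on the trajectories of the first $k-1$ particles, use a burn-in of length $\tmix^{\mathrm{IP}(2)}(\eps/\mathrm{poly}(k))$ so that each pair is near equilibrium, bound the no-interaction probability over a second window by Markov's inequality (this is Lemma~\ref{L:probJ}), observe that on the no-interaction event the tagged particle follows a free single-particle law (Lemma~\ref{L:mutvbound}), feed these into the submultiplicativity Lemma~\ref{L:submulti} for the one-coordinate-differing distance $\bar d_k$, iterate, and finally pass to the full TV distance via the triangle inequality over a length-$\le k$ path of configurations differing in one coordinate. Your treatment of the ``moreover'' clause (pushing the hypothesis through submultiplicativity of $\tmix^{\mathrm{IP}(2)}$ to get a power bound on $\delta$, then invoking the $\trel$--$\tmix$ two-sided estimates at $\eps=n^{-M}$) is likewise the paper's argument, packaged there as Lemma~\ref{lem:RWkvsRW1auxcalculaiton}.
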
 
\begin{rmk}\label{rmk:main}
\begin{enumerate}

\item As we make no assumption on the law of the permutations associated with the hyperedges, other than IP$(2)$ being irreducible with uniform stationary distribution, it need not be the case that IP$(n)$ is irreducible. For instance, consider the case $n=4$ where there is a single hyperedge containing all 4 vertices, and when it rings a random 3-cycle is applied. Since 3-cycles are even permutations  IP$(4)$ is reducible. The irreducibility of IP$(k)$ under the assumption $\delta <1$ is thus a non-trivial consequence of Theorem \ref{T:main}. 
\item The condition $Rkn^{-2}\tmix^{\mathrm{IP}(2)}(n^{-b})\le n^{-b}$ may seem strong, for example if $k=n$ and $r_e\equiv 1$ for all $e$, there are no regular hypergraphs for which the condition holds. However, our main focus is not in this regime; instead we are interested in much smaller values of $k$ for which there do exist hypergraphs satisfying this condition.

\item It follows by the contraction principle (see \cite{aldous}) that the same bounds as in Theorem \ref{T:main} hold for the exclusion process for the same values of $k$.\footnote{In fact, an inspection of the proof reveals that for EX$(k)$ we can replace $\tmix^{\mathrm{IP}(2)}(\cdot)$ by  $\tmix^{\mathrm{EX}(2)}(\cdot) \vee \tmix^{\mathrm{RW}(1)}(\cdot)$ (both in our upper bounds and in the definition of $\delta$ from Theorem \ref{T:main}).}
\end{enumerate}
\end{rmk}

For hypergraphs,  $\tmix^{\mathrm{IP}(2)}(1/4)$ and  $\tmix^{\mathrm{RW}(1)}(1/4)$ can be of different orders, see the example in \cite[Remark 1.5]{CP}. That example also demonstrates that one cannot replace   $\tmix^{\mathrm{IP}(2)}(\eps)$ in Theorem \ref{T:main} by  $\tmix^{\mathrm{RW}(1)}(\eps)$.
 Even for graphs, the proof that $\tmix^{\mathrm{IP}(2)}(1/4)$ and  $\tmix^{\mathrm{RW}(1)}(1/4)$ are comparable is surprisingly difficult \cite{Olive}, and it is not known if   $\tmix^{\mathrm{IP}(2)}(\eps)$ and  $\tmix^{\mathrm{RW}(1)}(\eps)$  are comparable, uniformly for all $\eps  \le 1/4$. One  exception in the graph setup is the case that $\eps \le n^{-\Omega(1)}$ where both quantities are comparable up to a constant factor  to $\trel^{\mathrm{RW}(1)} \log (1/\eps)$ (using \eqref{e:RW1mixtrel} combined with $\trel^{\mathrm{IP(2)}}=\trel^{\mathrm{RW}(1)}$ (the Caputo, Liggett and Richthammer Theorem \cite{Caputo})). Using \eqref{e:trel2trel1} below one can show that     $\tmix^{\mathrm{IP}(2)}(\eps)$ and  $\tmix^{\mathrm{RW}(1)}(\eps)$  are comparable also  for uniform hypergraphs, when  $\eps \le n^{-\Omega(1)}$.

To complement Theorem \ref{T:main}, we are interested in finding general conditions under which the conditions of Theorem \ref{T:main} hold, and under which  $\tmix^{\mathrm{IP}(2)}(\eps)\asymp \tmix^{\mathrm{RW}(1)}(\eps)$ or   $\trel^{\mathrm{IP}(2)}\asymp \trel^{\mathrm{RW}(1)}$. Theorem \ref{thm:transiance} gives one such case, and the following result gives another. 

 \begin{thm}
\label{thm:Caputo}
There exists an absolute constant $C>0$ such that for all $b \in (0,1] $  there exists $n_0(b)$ such that for all uniform interchange processes on a size $n \ge n_0(b) $  hypergraph satisfying   $Rkn^{-2}\tmix^{\mathrm{RW}(1)}(n^{-b})\le    n^{-b}$ we have that
\begin{equation}
\label{e:Caputo12uniform}
\trel^{\mathrm{IP}(k)} \le C b^{-1} \trel^{\mathrm{RW}(1)}.
\end{equation}

Moreover, regardless of the value of $R$, for  a uniform interchange process on a finite hypergraph we always have that
\begin{equation}
\label{e:trel2trel1}
\trel^{\mathrm{IP}(2)} \le C \trel^{\mathrm{RW}(1)}.
\end{equation}  
\end{thm}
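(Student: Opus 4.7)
The plan is to first establish the unconditional bound \eqref{e:trel2trel1} by a direct Dirichlet form comparison between $\mathrm{IP}(2)$ and $\mathrm{RW}(2)$, and then deduce \eqref{e:Caputo12uniform} from it by invoking Theorem~\ref{T:main}. For \eqref{e:trel2trel1}, write the generator of $\mathrm{IP}(2)$ on $L^2(\pi_2)$ (with $\pi_2$ uniform on ordered pairs $(x,y)$ with $x\neq y$) as $L_{\mathrm{IP}(2)}=\sum_e r_e(I-P_e^{(2)})$, where $P_e^{(2)}$ is the orthogonal projection that averages over the action of $S_{|e|}$ on positions in $e$. A direct hyperedge-by-hyperedge check, splitting cases by how many of $x,y$ lie in $e$, shows that on an \emph{additive} test function $f(x,y)=g(x)+h(y)$ one has $L_{\mathrm{IP}(2)}f(x,y)=L_{\mathrm{RW}(1)}g(x)+L_{\mathrm{RW}(1)}h(y)$; thus $\mathrm{IP}(2)$ acts on additive functions exactly as the product chain $\mathrm{RW}(2)$, and the Rayleigh quotient on any such $f$ matches that of $\mathrm{RW}(1)$ up to a $(n-2)/n$ factor.

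For a general mean-zero $f$, the plan is to lift $f$ to $\tilde f$ on $V\times V$ by a suitable choice of $\tilde f(x,x)$ (e.g.\ $0$, or a correction that cancels the leading ``mean-squared'' artefacts introduced on the diagonal) and prove
\begin{enumerate}
\item[(i)] $\mathcal{E}_{\mathrm{IP}(2)}(f)\ge c_1\,\mathcal{E}_{\mathrm{RW}(2)}(\tilde f)$ for some universal $c_1>0$;
\item[(ii)] $\mathrm{Var}_{\pi_V\otimes\pi_V}(\tilde f)\ge c_2\,\mathrm{Var}_{\pi_2}(f)$ for some universal $c_2>0$.
\end{enumerate}
Since $\mathrm{RW}(2)$ is a product chain, $\mathrm{gap}(\mathrm{RW}(2))=\mathrm{gap}(\mathrm{RW}(1))$, and the Poincar\'e inequality for $\mathrm{RW}(2)$ applied to $\tilde f$, combined with (i) and (ii), yields \eqref{e:trel2trel1}. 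Estimate (ii) is essentially automatic since the diagonal carries $\pi_V\otimes\pi_V$-mass $1/n$. Estimate (i) reduces to a per-hyperedge comparison: the $\mathrm{IP}(2)$ contribution from $e$ contains a pair-variance term of order $\frac{r_e|e|(|e|-1)}{n(n-1)}$ times the variance of $f$ over ordered distinct pairs in $e\times e$, and this must dominate the row/column contributions on the $\mathrm{RW}(2)$ side indexed by $y\in e$ or $x\in e$ that are absent from the $\mathrm{IP}(2)$ side; the key geometric point is that the fraction $(|e|-1)/|e|\ge 1/2$ of distinct pairs in $e\times e$ is bounded below.

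Given \eqref{e:trel2trel1}, I derive \eqref{e:Caputo12uniform} by verifying the hypothesis of \eqref{e:Caputo1}. A uniform interchange process is reversible with minimum stationary probability at least $1/[n(n-1)]$, so the $L^2$-mixing bound for reversible chains combined with \eqref{e:trel2trel1} gives
\[
\tmix^{\mathrm{IP}(2)}(n^{-b}) \lesssim \trel^{\mathrm{IP}(2)}\cdot b\log n \lesssim \trel^{\mathrm{RW}(1)}\cdot b\log n \asymp \tmix^{\mathrm{RW}(1)}(n^{-b})
\]
for $n$ large. Hence $Rkn^{-2}\tmix^{\mathrm{RW}(1)}(n^{-b})\le n^{-b}$ implies $Rkn^{-2}\tmix^{\mathrm{IP}(2)}(n^{-b'})\le n^{-b'}$ for some $b'=\Theta(b)$; applying \eqref{e:Caputo1} then gives $\trel^{\mathrm{IP}(k)}\le C'b^{-1}\trel^{\mathrm{IP}(2)}$, and a final application of \eqref{e:trel2trel1} produces \eqref{e:Caputo12uniform}.

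The hard part will be estimate (i). The tightest case is $|e|=2$, where the variance over distinct pairs and the full variance over $e\times e$ differ only by a factor of $2$, so the diagonal corrections must be handled sharply. Moreover, the naive choice $\tilde f(x,x)=0$ introduces a ``mean-squared'' term of the form $(|e|-1)\bigl(\text{mean of }f\text{ over }e\times e\setminus\mathrm{diag}\bigr)^2$ on the $\mathrm{RW}(2)$ side that is not a variance and cannot be controlled edge-by-edge. One natural resolution is to first Hoeffding-decompose $f=f_{\mathrm{add}}+f_{\mathrm{int}}$ into additive and interaction parts: the additive part is covered by the identity above, while for the interaction part the edge-averages vanish to leading order, rendering the mean-squared corrections negligible. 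Coordinating the choice of $\tilde f$ on the diagonal across all hyperedges so that the per-edge comparison holds with a universal constant is the technical heart of the proof.
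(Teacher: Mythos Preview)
Your reduction of \eqref{e:Caputo12uniform} to \eqref{e:trel2trel1} via \eqref{e:Caputo1} is correct and is exactly what the paper does: one line at the start of \S6 says ``Once we have this, \eqref{e:Caputo12uniform} follows easily by combining it with \eqref{e:Caputo1}.'' Your mixing-time chain $\tmix^{\mathrm{IP}(2)}(n^{-b})\lesssim \trel^{\mathrm{IP}(2)}b\log n\lesssim \trel^{\mathrm{RW}(1)}b\log n\asymp \tmix^{\mathrm{RW}(1)}(n^{-b})$ is the right way to transfer the hypothesis.

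For \eqref{e:trel2trel1}, however, your proposal is a plan rather than a proof, and the paper resolves the state-space mismatch by a different and considerably cleaner device that you should be aware of. Instead of lifting $f$ from $(V)_2$ to $V\times V$ and wrestling with the diagonal extension, the paper \emph{restricts} $\mathrm{RW}(2)$ to $(V)_2$: let $Q(2)$ be the chain obtained from $\mathrm{RW}(2)$ by watching it only at times when the two walkers are at distinct vertices. Then $Q(2)$ is reversible with uniform stationary law on $(V)_2$ and, as an induced chain, satisfies $\trel^{Q(2)}\le \trel^{\mathrm{RW}(2)}=\trel^{\mathrm{RW}(1)}$ (this is a standard fact, cf.\ \cite[Theorem 13.16]{levin}). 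Now $Q(2)$ and $\mathrm{IP}(2)$ live on the \emph{same} state space, and the paper proves $\mathcal{E}^{Q(2)}(f,f)\le 120\,\mathcal{E}^{\mathrm{IP}(2)}(f,f)$ by a direct (if lengthy) case analysis: the only non-obvious $Q(2)$ transitions are the ``compound'' ones that pass through the diagonal (e.g.\ $(x,y)\to(x,x)\to(z,x)$), and these are controlled by inserting intermediate points and using $(a-b)^2\le 2(a-c)^2+2(c-b)^2$.

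The difficulties you flag in your own approach --- the mean-squared artefact from $\tilde f(x,x)=0$, the need to coordinate the diagonal choice across all hyperedges, and the delicate $|e|=2$ case --- are genuine, and you do not actually resolve them; your Hoeffding-decomposition idea is plausible but not carried out. The paper's $Q(2)$ trick eliminates all of these issues at once: there is no diagonal to extend to, no lifting, and the constant is automatically independent of degrees, hyperedge sizes, and rate ratios. If you want to salvage your route, you would still need to produce a concrete $\tilde f$ and verify (i) edge-by-edge with a universal constant; the paper's argument shows this is achievable in spirit, but the induced-chain detour is what makes it routine.
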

Equation \eqref{e:Caputo12uniform}  can be seen as a partial progress on a conjecture of Caputo \cite{Cesi} that for uniform interchange process on hypergraphs,  we have that $\trel^{\mathrm{IP}(n)}=\trel^{\mathrm{RW}(1)}$. We note that in \eqref{e:Caputo1} we consider a more general class of interchange processes, by not requiring the permutations chosen to be uniformly distributed. However, \eqref{e:Caputo1} and \eqref{e:Caputo12uniform}  are of course weaker than Caputo's conjecture, as they do not apply for all $k$ and include some absolute constant.

The proof of \eqref{e:trel2trel1} uses a comparison of Dirichlet forms. One obstacle is that $\mathrm{IP}(2)$ and $\mathrm{RW}(2)$ (as well as $\mathrm{RW}(1)$) do not have the same state space. Moreover, some care is required to avoid dependence on the maximal degree, the maximal size of a hyperedge  and on $\max_{e,e' \in E}\frac{r_e}{r_e'} $ in the constant in the right-hand side of \eqref{e:trel2trel1}.

\subsection{Oliveira's conjecture - comparing with independent particles}
\label{s:Oliveira'sconjecture}
In \cite{Olive} Oliveira showed the existence of a universal constant $C$ such that for general graphs (but not hypergraphs) and rates $\max_{k \le n/2} \tmix^{\mathrm{EX}(k)}(\varepsilon)\le C\tmix^{\mathrm{RW}(1)}\log(n/\varepsilon)$  for all $\varepsilon\in(0,1)$, and conjectured that for all $\varepsilon\in(0,1)$ and $k \le n$, 
\begin{align}\label{e:Oconj}
 \quad\tmix^{\mathrm{EX}(k)}(\varepsilon)\le C\tmix^{\mathrm{RW}(k)}(\varepsilon)
\end{align}
(see \cite[Conjecture 2]{IPhypercube} and \cite[Question 1.5]{HPexclusion} for related problems; see Footnote \ref{F:AK} for a recent related result). As mentioned above, our results verify this in various setups for `small' $k$.

In \cite{HPexclusion} we proved an upper-bound on $\tmix^{\mathrm{EX}(k)}$ that was within a multiplicative factor of $\log\log n$ of this conjecture for all $k=n^{\Omega(1)}$ such that $k \le n/2$ (for regular graphs with $r_e\equiv 1$). Further, we demonstrated in certain situations that the conjecture holds; for example if $n^{c} \lesssim k\lesssim n^{1-c}$ for $c\in(0,1/2)$ (in this case the constant $C$ in \eqref{e:Oconj} depends on $c$). Other examples when $k$ is not assumed to be small are when the spectral-gap is at most $(\log n)^{-4}$ or when the degree is at least logarithmic. These bounds (and in fact all results on EX$(k)$ in \cite{HPexclusion}) are valid also for $\tmix^{\mathrm{IP}(k)}$ when $k \le n(1-c)$ (for any constant $c \in (0,1/2]$, possibly with additional dependence of the constant $C$ on $c$). It is interesting to note that the graphs for which \cite{HPexclusion} does not offer sharp bounds for large $k$ are, as said above, ones with fairly large spectral-gaps (at least  $(\log n)^{-4}$). For such graphs the condition on $k$ in Theorems \ref{thm:transiance} and \ref{T:main} is milder.

 \subsection*{Acknowledgements} We are grateful to Pietro Caputo, Roberto Oliveira and Justin Salez for stimulating discussions.

\section{Preliminaries}
\subsection{Notation and basic definitions}
For a finite set $\Omega$  we denote by $\mathcal{S}_\Omega$ the group of permutations of elements in $\Omega$. For  $k\le |\Omega|$, we write $(\Omega)_k$ for the set of $k$-tuples of distinct elements from $\Omega$. For ${\bf x} \in (\Omega)_k$ we denote $\mathbf{ O}({\bf x})=\{{\bf x}_i:i \in [k] \}$. For a random variable $X$ we write $\mathcal{L}[X]$ for the law or distribution of $X$. The total-variation distance between two distributions $\mu$ and $\nu$ is defined as 
\[
\|\mu-\nu\|\tv := \sum_{a}(\mu(a)-\nu(a))_+=\frac12\sum_a|\mu(a)-\nu(a)|=\inf_{(X,Y):\mathcal{L}[X]= \mu, \mathcal{L}[Y]=\nu}\PP[X\neq Y],
\]
where, in the last equality, the infimum is over all couplings $(X,Y)$ of $(\mu,\nu)$.

For  $\omega\in \Omega$ and a continuous-time irreducible Markov process $(X_t^\omega)_{t\ge0}$ with state space $\Omega$ and satisfying $X_0^\omega=\omega$, we define its $\varepsilon$ total-variation mixing time as
\[
\tmix^X(\varepsilon):=\inf\big\{t\ge0:\,\max_{\omega\in \Omega}\|\mathcal{L}[X_t^\omega]-\pi\|\tv\le \varepsilon\big\},
\]
where $\pi$ denotes the stationary distribution of the process.

\subsection{Graphical construction}
We present a construction of a random walk and the interchange process. One important feature of this construction is it places these processes on the same probability space which allows us to directly relate them. Graphical construction for the interchange process on graphs is classical, see \cite{Liggettbook1}. Our construction for this process on hypergraphs is the same as that appearing in Connor-Pymar.

We take the state space to be $\cS_V$. In this notation, the particles are labeled by the set $V$. We think of  $\sigma(v)$ for $\sigma \in \cS_V$ and $v \in V$ as the location of the particle labelled $v$ in configuration~$\sigma$.

The first step is to construct a sequence of independent edge choices, that is, a sequence $(e_n)_{n\in\mathbb{N}}$ with the property that for each $e\in E$, $\mathbb{P}[e_n=e]\propto r_e$. Next, we require a sequence of permutations choices. Given $(e_n)_{n\in\mathbb{N}}$, we construct a sequence $(\sigma_n)_{n\in\mathbb{N}}$ such that for each $n\in\mathbb{N}$, $\sigma_n\in\mathcal{S}_{e_n}$. Finally we determine the jump times. Let $\Lambda$ be a Poisson process of rate $\sum_e r_e$. For $0<s<t$ denote by $\Lambda[s,t]$ the number of points of $\Lambda$ in interval $[s,t]$ and define a permutation $I_{[s,t]}:V\to V$ associated with time interval $[s,t]$ to be the composition of permutations occurring during this time:
\[
I_{[s,t]}=\sigma_{{\Lambda[0,t]}}\circ\sigma_{{\Lambda[0,t]-1}}\circ\cdots\circ\sigma_{{\Lambda[0,s)+1}}.
\]
We set $I_t:=I_{[0,t]}$ for each $t>0$. These functions can be lifted to functions on $(V)_k$ by setting $I_{[s,t]}({\bf x})=(I_{[s,t]}({\bf x}_1),I_{[s,t]}({\bf x}_2),\ldots,I_{[s,t]}({\bf x}_n))$, for ${\bf x}\in(V)_k$. In this construction, $I_{[s,t]}(a)$ is the location at time $t$ of the particle that occupied $a$ at time $s$. We have the following consequence:

\begin{prop}[Proof omitted]
Fix $s\ge0$. Then 
\begin{enumerate}
\item for each $x\in V$, the process $(I_{[s,t]}(x))_{t\ge0}$ is a random walk started from $x$,
\item for each ${\bf x}\in (V)_k$, the process $(I_{[s,t]}({\bf x}))_{t\ge0}$ is a $k$-particle interchange process started from ${\bf x}$.
\end{enumerate}
\end{prop}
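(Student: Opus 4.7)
The plan is to verify directly that the constructed process has the correct infinitesimal generator, relying only on standard properties of Poisson processes.

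First I would reduce to $s=0$. The Poisson process $\Lambda$ is time-homogeneous and the sequences $(e_n)_{n\in\mathbb{N}}$ and $(\sigma_n)_{n\in\mathbb{N}}$ are i.i.d.\ and independent of $\Lambda$. Hence, by the strong Markov property applied at the deterministic time $s$ (or equivalently by conditioning on $\Lambda[0,s]$ and using that the post-$s$ increments are independent of the pre-$s$ history), the family $(I_{[s,s+u]})_{u\ge 0}$ has the same law as $(I_{[0,u]})_{u\ge 0}$, so it suffices to treat $s=0$. Next I would invoke the standard thinning/superposition for Poisson processes: the points of $\Lambda$ at which the chosen edge equals $e$ form, jointly over $e\in E$, independent Poisson processes of rates $r_e$, and at each such point an independent permutation of $\cS_{e}$ with the prescribed law is applied to the current configuration. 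This is verbatim the verbal description of the dynamics given in the introduction, so the work is now essentially bookkeeping.

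For part (1), I would read off the rate at which a single tracked label moves from $v$ to $u$: such a transition occurs exactly when a hyperedge $e\ni v$ rings and the associated permutation $\sigma_e$ satisfies $\sigma_e(v)=u$. Summing over edges gives rate $\sum_{e\ni v}r_e\,\PP[\sigma_e(v)=u]$, which is by definition the jump rate of the single-particle process $\mathrm{RW}(1)=\mathrm{IP}(1)$. Combined with the independence of the post-jump randomness from the past, this identifies $(I_{[0,t]}(x))_{t\ge0}$ in law as $\mathrm{RW}(1)$ started from $x$.

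For part (2), the same Poisson clock on $e$ and the same permutation $\sigma_e$ act simultaneously on all $k$ labels: coordinates of $\mathbf{x}$ whose current positions lie in $e$ are permuted among themselves by $\sigma_e$, the rest are untouched. Reading off the generator shows that at rate $r_e$ the current $k$-tuple is replaced by its coordinatewise image under $\sigma_e$, which is exactly the generator of $\mathrm{IP}(k)$ on $(V)_k$. The only technical point to address is the almost-sure well-definedness of the composition defining $I_{[s,t]}$ on any bounded interval, which is immediate from local finiteness of $\Lambda$. I do not anticipate a genuine obstacle here; the main subtlety is to keep conventions straight, namely that $\sigma_n$ composes on the left and that $I_{[s,t]}(\mathbf{x})$ therefore tracks labels forward in time rather than backward, so that the graphical picture realises the forward dynamics and not their time-reversal.
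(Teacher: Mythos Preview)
Your argument is correct. The paper explicitly omits the proof of this proposition, so there is no approach to compare against; your reduction to $s=0$ via stationarity and independence of the marked Poisson process, followed by the thinning identification of per-edge clocks and a direct reading of the generator, is exactly the standard route one would expect and contains no gaps.
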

\subsection{Some auxiliary results}
\begin{lemma}
\label{lem:RWkvsRW1auxcalculaiton}
\begin{equation}
\label{e:kvs1}
\forall k \ge 3 ,\, \eps \in (0,1/4), \quad  \frac 12 \tmix^{\mathrm{RW}(1)}(4\eps / k)  \le \tmix^{\mathrm{RW}(k)}(\eps) \le \tmix^{\mathrm{RW}(1)}(\eps / k). 
\end{equation}
Moreover,  for an interchange process on a size $n$  hypergraph such that \emph{RW(1)} is reversible and has  uniform  stationary distribution, if for some $b \in (0,1]$ such that $n^{-b} \le 1/4$ and $C \ge 1$ we have that $\tmix^{\mathrm{IP}(k)}(n^{-b}  ) \le C \tmix^{\mathrm{RW}(1)}(n^{-b}  )$ then for all $\eps \in (0,n^{-b})$
\begin{equation}
\label{e:smalleps1}
\tmix^{\mathrm{IP}(k)}(\eps ) \le 32Cb^{-1} \tmix^{\mathrm{RW}(1)}(\eps ),
\end{equation}
and provided that \emph{IP}$(k)$ is also reversible then we  have that
\begin{equation}
\label{e:smalleps4}
\trel^{\mathrm{IP}(k)}\le 32Cb^{-1} \trel^{\mathrm{RW}(1)}.
\end{equation}
Similarly, for an interchange process on a size $n$ hypergraph  such that \emph{IP(2)} is reversible, irreducible and  has  uniform stationary distribution,  if for some $b \in (0,1]$ such that $n^{-b} \le 1/4$ and $C \ge 1$ we have that  $\tmix^{\mathrm{IP}(k)}(n^{-b} ) \le C \tmix^{\mathrm{IP}(2)}(n^{-b})$ then for all $\eps \in (0,n^{-b})$
\begin{equation}
\label{e:smalleps2}
\tmix^{\mathrm{IP}(k)}(\eps ) \le 64Cb^{-1} \tmix^{\mathrm{IP}(2)}(\eps ),
\end{equation}
and provided that \emph{IP}$(k)$ is also reversible then we  have that
\begin{equation}
\label{e:smalleps3}
\trel^{\mathrm{IP}(k)}\le 64Cb^{-1} \trel^{\mathrm{IP}(2)}.
\end{equation}
\end{lemma}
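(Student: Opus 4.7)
I would first prove \eqref{e:kvs1} directly, then handle \eqref{e:smalleps1}--\eqref{e:smalleps3} by a common ``submultiplicativity plus spectral estimates'' template. For \eqref{e:kvs1}, the upper bound is a product coupling: if each of the $k$ independent coordinates is $\eps/k$-close to stationary in TV at time $T$, then by triangle inequality on product measures the joint is $\eps$-close to the product uniform, giving $\tmix^{\mathrm{RW}(k)}(\eps) \le \tmix^{\mathrm{RW}(1)}(\eps/k)$. For the reverse inequality, each marginal of $\mathrm{RW}(k)$ is an $\mathrm{RW}(1)$, so $d^{\mathrm{RW}(1)}(T) \le D^{\mathrm{RW}(k)}(T) \le \eps$ at $T = \tmix^{\mathrm{RW}(k)}(\eps)$; applying the standard submultiplicativity $d(2T) \le 2 d(T)^2$ then gives $d^{\mathrm{RW}(1)}(2T) \le 2\eps^2 \le 4\eps/k$ in the relevant range, whence $\tmix^{\mathrm{RW}(1)}(4\eps/k) \le 2T$.

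For \eqref{e:smalleps1} and \eqref{e:smalleps2}, I would exploit the submultiplicativity $\bar d^{\mathrm{IP}(k)}(s+t) \le \bar d^{\mathrm{IP}(k)}(s)\,\bar d^{\mathrm{IP}(k)}(t)$, which holds for arbitrary Markov chains (no reversibility of $\mathrm{IP}(k)$ required) and satisfies $\bar d \le 2d$. Setting $T_0 = \tmix^{\mathrm{IP}(k)}(n^{-b})$, iteration gives $\bar d^{\mathrm{IP}(k)}(mT_0) \le (2n^{-b})^m$, so $m := \lceil 2\log(1/(2\eps))/(b\log n) \rceil$ suffices to bring the distance below $\eps$, yielding $\tmix^{\mathrm{IP}(k)}(\eps) \le m T_0 \le m C\, \tmix^{\mathrm{RW}(1)}(n^{-b})$ (or the $\mathrm{IP}(2)$ version). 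The ratio $\tmix^{\mathrm{RW}(1)}(n^{-b})/\tmix^{\mathrm{RW}(1)}(\eps)$ can then be bounded by $O(\log n / \log(1/(2\eps)))$ by combining the reversible $L^2$ upper bound $\tmix(\eps') \le \trel \log(\pi_{\min}^{-1/2}/(2\eps'))$ with the spectral lower bound $\tmix(\eps') \ge (\trel - 1)\log(1/(2\eps'))$; multiplying by $m$ cancels the $\log(1/(2\eps))$ dependence and produces the $O(1/b)$ factor.

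Equations \eqref{e:smalleps4} and \eqref{e:smalleps3} use the additional assumption that $\mathrm{IP}(k)$ is reversible, via the lower bound $\tmix^{\mathrm{IP}(k)}(n^{-b}) \ge (\trel^{\mathrm{IP}(k)} - 1)\log(n^b/2)$. Combining with the hypothesis and the reversible spectral upper bound on $\tmix^{\mathrm{RW}(1)}(n^{-b})$ (or $\tmix^{\mathrm{IP}(2)}(n^{-b})$) and solving for $\trel^{\mathrm{IP}(k)}$ yields the claim, the $1/b$ factor arising again from $\log(\pi_{\min}^{-1/2} n^b)/\log(n^b) = O(1/b)$. The gap between the constants $32$ and $64$ reflects that the $\mathrm{IP}(2)$ state space has $\asymp n^2$ elements rather than $n$, which changes $\pi_{\min}^{-1/2}$ from $\sqrt n$ to $n$ in the $L^2$ bound. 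The main anticipated obstacle is careful constant bookkeeping to recover the precise prefactors $32Cb^{-1}$ and $64Cb^{-1}$; a secondary subtlety is that for \eqref{e:smalleps1} and \eqref{e:smalleps2}, which do not assume reversibility of $\mathrm{IP}(k)$, one must work with $\bar d$ rather than $d$ in the submultiplicativity step, since the sharper bound $d(s+t) \le 2 d(s) d(t)$ is not available in the non-reversible case.
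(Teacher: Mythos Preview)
Your strategy for \eqref{e:smalleps1}--\eqref{e:smalleps3} is essentially the paper's: both combine the submultiplicativity of $\bar d$ (equivalently, $\tmix(\delta^\ell)\le\ell\,\tmix(\delta/2)$) with the reversible sandwich $\trel\log\frac{1}{2\eps}\le\tmix(\eps)\le\trel\log\frac{1}{\eps\pi_{\min}}$. The paper organises the computation slightly differently---it first proves $\tmix^{\mathrm{IP}(k)}(n^{-mb})\le 16Cb^{-1}\tmix^{\mathrm{RW}(1)}(n^{-mb})$ at the discrete grid $m\in\mathbb N$ and then interpolates by monotonicity, whereas you pick $m$ as a function of $\eps$ directly---but the content is the same. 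For \eqref{e:smalleps4} and \eqref{e:smalleps3} the paper takes a slightly cleaner route than yours: rather than plugging in $\eps=n^{-b}$ and solving for $\trel^{\mathrm{IP}(k)}$, it uses the already--established \eqref{e:smalleps1} (resp.\ \eqref{e:smalleps2}) together with $\trel^{\mathrm{IP}(k)}\le\tmix^{\mathrm{IP}(k)}(\eps)/\log\frac{1}{2\eps}$ and sends $\eps\to0$, so that the ratio $\log(n/\eps)/\log\frac{1}{2\eps}\to1$ kills the extra $\log n$ factor automatically. (Minor point: you wrote $(\trel-1)$, which is the discrete--time bound; in continuous time there is no ``$-1$''.)

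There is, however, a genuine gap in your sketch of the lower bound in \eqref{e:kvs1}. From $d^{\mathrm{RW}(1)}(T)\le d^{\mathrm{RW}(k)}(T)\le\eps$ you get, via $\bar d(2T)\le\bar d(T)^2\le(2d(T))^2$, only $d^{\mathrm{RW}(1)}(2T)\le4\eps^2$. For this to be $\le4\eps/k$ you need $\eps\le1/k$, whereas the statement is claimed for all $\eps\in(0,1/4)$; for, say, $k=100$ and $\eps=0.2$ your chain of inequalities gives nothing close to $4\eps/k$. The projection bound $d_1\le\eps$ simply does not exploit the product structure enough in this regime; one needs an inequality that sees all $k$ coordinates at once (e.g.\ via Hellinger affinity, which tensorises exactly, or a distinguishing--statistic argument based on counting how many coordinates land in a witness set). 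The paper itself does not prove \eqref{e:kvs1} but cites \cite{HPexclusion}; if you want a self--contained argument you will have to supply this missing step. In the applications in the present paper only the range $\eps\le\frac1k\wedge\frac14$ is used, and there your argument suffices.
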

\begin{proof} The first display is Equation (18) from \cite{HPexclusion}. For \eqref{e:smalleps1} we use the general relations between $\tmix$ and $\trel$ (here we rely on reversibility and on the uniform distribution being stationary) 
\begin{equation}
\label{e:RW1mixtrel}
\trel^{\mathrm{RW}(1)} \log \left(\frac{1}{2\eps} \right) \le \tmix^{\mathrm{RW}(1)}(\eps ) \le \trel^{\mathrm{RW}(1)} \log (n/\eps) 
\end{equation} \cite[Lemma 20.11, Theorem 20.6 and (4.43)]{levin} as well as submultiplicativity of mixing times \cite[(4.29)]{levin} (i.e.\ $\tmix(\delta^{\ell}) \le \ell \tmix(\delta/2)$ for all $\delta>0$ and $\ell \in \mathbb{N}$) to deduce that for all $m \in \mathbb{N}$,  \[ \tmix^{\mathrm{RW}(1)}\left(\frac{1}{2n^{2mC(1+b)}}\right) \le\tmix^{\mathrm{RW}(1)}\left(\frac{1}{n^{3mC(1+b)}}\right)   \le  8Cb^{-1}  \tmix^{\mathrm{RW}(1)}\left(\frac{1}{2n^{mb}}  \right) \le 16Cb^{-1}\tmix^{\mathrm{RW}(1)}(n^{-mb}  ),\] and that  
 \begin{equation}
\label{e:RWkRW1trelsmalleps}
\begin{split} 
  \tmix^{\mathrm{IP}(k)}(n^{-mb}  ) & \le m\tmix^{\mathrm{IP}(k)}(n^{-b}/2  ) \le 2m\tmix^{\mathrm{IP}(k)}(n^{-b}  ) \le 2mC\tmix^{\mathrm{RW}(1)}(n^{-b}  ) \\ & \le 2mC \trel^{\mathrm{RW}(1)} \log (n^{1+b})= \trel^{\mathrm{RW}(1)} \log (n^{2mC(1+b)}) \\ & \le  \tmix^{\mathrm{RW}(1)}(n^{-2mC(1+b)}/2 ) \le 16Cb^{-1}\tmix^{\mathrm{RW}(1)}(n^{-mb}  ).
 \end{split}
 \end{equation}
Finally, if $\eps \in (n^{-mb}  ,n^{-(m+1)b}  )$ then by monotonicity and submultiplicativity
\[\tmix^{\mathrm{RW}(k)}(\eps  ) \le \tmix^{\mathrm{RW}(k)}(n^{-(m+1)b}    )  \le 2  \tmix^{\mathrm{RW}(k)}(n^{-mb}    ) \le 32Cb^{-1}\tmix^{\mathrm{RW}(1)}(n^{-mb}  )  \le 32Cb^{-1}\tmix^{\mathrm{RW}(1)}(\eps).      \]
This concludes the proof of \eqref{e:smalleps1}. We now prove \eqref{e:smalleps4}.
By the general relations between $\tmix$ and $\trel$, and equation~\eqref{e:smalleps1},\[\trel^{\mathrm{IP}(k)}\le \frac{ \tmix^{\mathrm{IP}(k)}(\eps  )}{|\log(2 \eps)|} \le\frac{ 32Cb^{-1} \tmix^{\mathrm{RW}(1)}(\eps )}{|\log(2 \eps)|} \le\frac{ 32Cb^{-1} \trel^{\mathrm{RW}(1)} \log(n/\eps)}{|\log(2 \eps)|}.   \] 
Taking the limit as $\eps \to 0$ concludes the proof of
\eqref{e:smalleps4}.
 
 For \eqref{e:smalleps2} we use the general relations between $\tmix$ and $\trel$ (here again we rely on reversibility and on the uniform distribution being stationary),
\begin{equation}
\label{e:IP2mixtrel}
\trel^{\mathrm{IP}(2)} \log \left(\frac{1}{2\eps} \right) \le \tmix^{\mathrm{IP}(2)}(\eps ) \le \trel^{\mathrm{IP}(2)} \log (n^{2}/\eps) 
\end{equation}
as well as submultiplicativity of mixing times to deduce that for all $m \in \mathbb{N}$, 
 \begin{equation}
\label{e:RWkRW1trelsmalleps2}
\begin{split} 
  \tmix^{\mathrm{RW}(k)}(n^{-mb}  ) & \le 2mC\tmix^{\mathrm{RW}(1)}(n^{-b}  ) \le 2mC \trel^{\mathrm{IP}(2)} \log (n^{2+b}) \le 32Cb^{-1}\tmix^{\mathrm{IP}(1)}(n^{-mb}  ). \\ & 
 \end{split}
 \end{equation}
The proof of  \eqref{e:smalleps2} is concluded as that of \eqref{e:smalleps1}. Finally, the proof of \eqref{e:smalleps3} is analogous to that of \eqref{e:smalleps4} and is hence omitted.  
\end{proof}

\section{Proof of bound for hypergraphs: Theorem~\ref{T:main}}

We shall say that two particles \emph{interact} at time $t$ if they occupy some vertices $u$ and $v$ at time $t_-$ (i.e.\ at some time interval $[t-\eps,t)$) and at time $t$ an edge containing $u$ and $v$ rings.   For $s>0$ and ${\bf z}\in(V)_{k-1}$, we denote by $\Gamma_s$ the set of c\`adl\`ag sample paths of a $(k-1)$-particle interchange process up to time $s$ and  by $\Gamma_s^{{\bf a}}\subset\Gamma_s$ those paths which start at configuration ${\bf a}\in(V)_{k-1}$. We also let $J_s=J_s({\bf x})$ be the event that the $k$th particle avoids interacting with the other $k-1$ particles during time interval $[s,2s]$ when we initialise from configuration ${\bf x}\in(V)_k$.

For $s\ge0$, ${\bf z} \in(V)_{k-1}, \gamma\in\Gamma_{2s}^{{\bf z}}$,  $x\in V\setminus {\bf O}({\bf z})$ and $c\in V$, we define a law \[\mu_s(\bullet)=\mu_s^{\gamma,x,c}(\bullet)=
\PP[I_{2s}(x)\in \bullet\mid I_s(x)=c,\,(I_t({\bf z}))_{0\le t\le 2s}=\gamma],\]
that is, $\mu_s$ is the law of the $k$th particle at time $2s$ of a $k$-particle interchange process conditioned on the trajectory of the first $k-1$ particles, and on the location of the $k$th particle at time $s$. (Note that, by the Markov property, $\mu_s^{\gamma,x,c}$ does in fact not depend on $x$.)

 \begin{lemma}\label{L:mutvbound}
For all $s\ge0$, ${\bf z}\in(V)_{k-1}, \gamma\in\Gamma_{2s}^{{\bf z}}$,  $x\in V\setminus{\bf O}({\bf z})$ and $c\in V$,
\[
\|\mu_s^{\gamma,x,c}-\mathcal{L}[I_s(c)]\|\tv\le 1-\PP[J_s(({\bf z},x))\mid I_s(x)=c,\, (I_t({\bf z}))_{0\le t\le 2s}=\gamma].
\]
 \end{lemma}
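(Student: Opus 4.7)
My plan is to bound the total-variation distance by the coupling characterization: it suffices to produce, on one probability space, random variables $Y_1 \sim \mu_s^{\gamma,x,c}$ and $Y_2 \sim \mathcal{L}[I_s(c)]$ with
\[
\PP[Y_1 = Y_2] \ge \PP[J_s(({\bf z},x)) \mid I_s(x) = c,\, (I_t({\bf z}))_{0 \le t \le 2s} = \gamma],
\]
from which the claimed inequality follows immediately. Everything will be carried out inside the graphical construction. Under the conditional law appearing in the definition of $\mu_s^{\gamma,x,c}$, I set $Y_1 = I_{2s}(x)$; since $I_s(x) = c$, the Markov property gives $Y_1 = I_{[s,2s]}(c)$, depending only on the Poisson edge rings and permutation draws in $[s,2s]$. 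I realise $Y_2$ as the position at time $s$ of an independent random walker started at $c$.

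The heart of the argument is a decoupling on the event $J_s$. By the definition of $J_s$, no edge that rings in $[s,2s]$ simultaneously contains the position of the $k$th particle and the position of any of the first $k-1$ particles. Consequently, every ring that actually moves the $k$th particle in $[s,2s]$ acts on an edge that does not involve any $z_i$ at the moment of the ring, and is therefore not constrained by the conditioning on the trajectory $\gamma$. Using the independence of the Poisson processes on disjoint edges, together with the Markov property to handle the conditioning $I_s(x)=c$, I will show that on $J_s$ the driving randomness for $I_{[s,2s]}(c)$ is distributionally the same as the driving randomness of an independent random walker from $c$. This lets me set $Y_2 = Y_1$ on $J_s$.

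Off $J_s$, I couple $Y_2$ in whatever way is needed to preserve its marginal $\mathcal{L}[I_s(c)]$. The feasibility of such a coupling is equivalent to the pointwise domination
\[
\PP\bigl[I_{2s}(x) = a,\; J_s \,\bigm|\, I_s(x) = c, (I_t({\bf z}))_{0 \le t \le 2s} = \gamma\bigr] \;\le\; \mathcal{L}[I_s(c)](a) \qquad \forall a \in V,
\]
which is precisely the content of the decoupling statement above: on $J_s$, the conditional law of $I_{[s,2s]}(c)$ is dominated by the unconstrained random-walk law because only the ``free'' rings act on the $k$th particle. Summing over $a$ gives $\sum_a \min(\mu_s^{\gamma,x,c}(a), \mathcal{L}[I_s(c)](a)) \ge \PP[J_s \mid \cdots]$, which is exactly the desired bound.

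The main technical obstacle will be the rigorous justification of the decoupling step, i.e.\ the pointwise domination. This requires a clean splitting of the Poisson edge-ring data into the portion whose law is constrained by $\gamma$ and the portion which is not, together with an argument that on $J_s$ only the latter portion acts on the $k$th particle in $[s,2s]$. I expect the bookkeeping of rings that contain $\gamma_i(t)$ but not the $k$th particle's position (and vice versa), and the use of the strong Markov property along the successive ring times, to be the delicate part of the calculation.
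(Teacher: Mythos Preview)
Your approach is essentially the paper's: both reduce the claim to the pointwise domination $\PP[I_{2s}(x)=a,\,J_s\mid I_s(x)=c,\,(I_t(\mathbf z))_{0\le t\le 2s}=\gamma]\le \PP[I_s(c)=a]$, you via the coupling/overlap characterisation of total variation, the paper via the positive-part formula together with $(a+b-c)_+\le a+(b-c)_+$. The paper dispatches the key inequality more cleanly than you anticipate: rather than any bookkeeping along successive ring times, it simply observes that the left-hand side equals $\PP[(I_{[s,t]}(c))_{s\le t\le 2s}\in \Gamma_{s,\gamma}^{\mathrm{RW}}(c,a)]$ and then uses the set inclusion $\Gamma_{s,\gamma}^{\mathrm{RW}}(c,a)\subseteq \Gamma_s^{\mathrm{RW}}(c,a)$.
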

\begin{proof}
From the definition of total-variation and using that for $a,b,c\in\mathbb{R}_+$, $(a+b-c)_+\le a+(b-c)_+$,
\begin{align}
&\|\mu_s^{\gamma,x,c}-\mathcal{L}[I_s(c)]\|\tv\notag\\&=\sum_{a\in V}\left( \mu_s^{\gamma,x,c}(a)-\PP[I_s(c)=a] \right)_+\notag\\\begin{split}
&\le \sum_{a\in V}\big( \PP[I_{2s}(x)=a,\,J_s(({\bf z},x))\mid I_s(x)=c,(I_t({\bf z}))_{0\le t\le 2s}=\gamma]-\PP[I_s(c)= a] \big)_+\\
&\phantom{\le}+1-\PP[J_s(({\bf z},x))\mid I_s(x)=c,(I_t({\bf z}))_{0\le t\le 2s}=\gamma],\end{split}\label{eq:mutv}
\end{align}
where the last term comes from $\sum_{a \in V}\PP[I_{2s}(x)=a,\,J_s(({\bf z},x))^{c}\mid I_s(x)=c,(I_t({\bf z}))_{0\le t\le 2s}=\gamma]$.

Next we argue that $\PP[I_{2s}(x)=a,\,J_s(({\bf z},x))\mid I_s(x)=c,(I_t({\bf z}))_{0\le t\le 2s}=\gamma] \le \PP[I_s(c)= a]$ for all $a$. The intuition is that having to avoid interacting with the trajectory $\gamma$ during $[s,2s]$ imposed by $J_s(({\bf z},x))$ and the conditioning $(I_t({\bf z}))_{0\le t\le 2s}=\gamma$ can only decrease the chance of reaching any given target vertex $a$ at time $2s$. To prove this we need some additional notation.
For $a,b\in V$ and $s>0$, let $\Gamma_s^{\mathrm{RW}}(a,b)$ be the set of c\`adl\`ag sample paths of a random walk up to time $s$ which starts at $a$ and terminates at $b$. Further, for any $\gamma\in\Gamma_s$, let $\Gamma_{s,\gamma}^\mathrm{RW}(a,b)\subseteq \Gamma_s^\mathrm{RW}(a,b)$ be those sample paths which avoid interacting with $\gamma$ (that is, trajectories of the random walk which do not interact with any of the $k-1$ particles moving according to $\gamma$). Then for $\gamma\in\Gamma_s$ and $a,c\in V$, we have
\begin{align*}
&\PP[I_{2s}(x)=a,\,J_{s}(({\bf z},x))\mid I_s(x)=c,\,(I_t({\bf z}))_{s\le t\le 2s}=\gamma]\\&=\PP[(I_{[s,t]}(c))_{s\le t\le 2s}\in \Gamma_{s,\gamma}^\mathrm{RW}(c,a)]\le \PP[(I_{[s,t]}(c))_{s\le t\le 2s}\in \Gamma_{s}^\mathrm{RW}(c,a)]\\
&=\PP[I_s(c)=a].
\end{align*}Plugging this into \eqref{eq:mutv} gives the claimed inequality.
\end{proof}

For Lemma~\ref{L:mutvbound} to be useful we need to lower-bound the probability of $J_s$:
\begin{lemma}\label{L:probJ}
Fix $\varepsilon\in(0,1)$ and let $s=\tmix^{\mathrm{IP}(2)}(\tfrac{\varepsilon}{16k^2})$. Then for all 
$k\ge 2$,
\[\min_{{\bf x}\in (V)_k}\P[J_s({\bf x})]\ge1-\frac{\varepsilon}{16k}-\frac{sk}{n^2}\sum_er_e|e|(|e|-1).\]
\end{lemma}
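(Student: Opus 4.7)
The plan is to bound $\PP[J_s({\bf x})^c]$ by controlling, for each $j\in\{1,\ldots,k-1\}$, the probability that particles $j$ and $k$ interact during $[s,2s]$, and then taking a union bound over $j$. The key consistency property is that the joint marginal of any two of the $k$ tracked particles in IP$(k)$ evolves as IP$(2)$ starting from the corresponding initial positions: this is immediate from the graphical construction of Section~2.2, since the effect of each edge-ring $(e_n,\sigma_n)$ on any tracked particle depends only on $\sigma_n$ and on that particle's current position, not on the identities of the other particles sharing the hyperedge $e_n$. By the choice $s=\tmix^{\mathrm{IP}(2)}(\varepsilon/(16k^2))$ and monotonicity of total-variation distance along the semigroup, the joint law $\mu_s^{j,k}$ of $(X_j(s),X_k(s))$ lies within total variation $\varepsilon/(16k^2)$ of the uniform distribution $\pi$ on $(V)_2$.

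I would then couple the original pair $(X_j,X_k)$ with a stationary realisation $(Y_j,Y_k)$ of IP$(2)$, by sampling $(Y_j(s),Y_k(s))\sim\pi$ optimally against $(X_j(s),X_k(s))$ and driving both pairs by the same edge-rings and permutations during $[s,2s]$. The coupling succeeds with probability at least $1-\varepsilon/(16k^2)$, and on the success event the two pairs coincide throughout $[s,2s]$. At stationarity, the instantaneous rate of $j$-$k$ interactions equals $\sum_e r_e\cdot |e|(|e|-1)/(n(n-1))=R/(n(n-1))$, since under $\pi$ the probability that the pair occupies two distinct vertices of a given hyperedge $e$ is $|e|(|e|-1)/(n(n-1))$. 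Hence the expected number of $j$-$k$ interactions during $[s,2s]$ in the stationary process equals $sR/(n(n-1))$, and Markov's inequality bounds the probability of at least one $j$-$k$ interaction for the original pair by $\varepsilon/(16k^2)+sR/(n(n-1))$.

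Finally, since $J_s({\bf x})^c$ is contained in the union over $j\in\{1,\ldots,k-1\}$ of the events that particles $j$ and $k$ interact during $[s,2s]$, a union bound yields
\[
\PP[J_s({\bf x})^c]\le (k-1)\left(\frac{\varepsilon}{16k^2}+\frac{sR}{n(n-1)}\right)\le \frac{\varepsilon}{16k}+\frac{skR}{n^2},
\]
where the last step uses $k-1\le k$ together with the algebraic identity $(k-1)n\le k(n-1)$, which is valid since $k\le n$. The main delicate point is verifying the two-particle marginal property in the hypergraph setting, where a single hyperedge ring can simultaneously permute many particles via an arbitrary random permutation; this is precisely what the graphical construction of Section~2.2 is designed to give. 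The remaining ingredients, an optimal coupling at time $s$ and a stationary first-moment computation for the pairwise interaction rate, are then routine.
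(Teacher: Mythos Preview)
Your proposal is correct and follows essentially the same approach as the paper: a union bound over the $k-1$ pairs $(j,k)$, the observation that each pair marginally evolves as IP$(2)$, an optimal coupling at time $s$ with a stationary IP$(2)$ pair, a first-moment computation of the stationary pairwise interaction rate followed by Markov's inequality, and the same final algebraic simplification $(k-1)/[n(n-1)]\le k/n^2$. The only cosmetic difference is that you spell out the graphical-construction justification for the two-particle marginal property more explicitly than the paper does.
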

\emph{Proof.}
By a union bound \begin{align}\label{eq:Jdecomp}
\PP[J_s^c]\le\sum_{i=1}^{k-1}\PP[J_{s,i}^c],
\end{align} where $J_{s,i}$ is the event that the $k$th particle avoids interacting with the $i$th particle during time interval $[s,2s]$. We will use a coupling argument to upper-bound $\PP[J_{s,i}^c]$ for each $i\in\{1,\ldots,k-1\}$. Specifically, we couple the pair ($i$th and $k$th particles) with a pair started from time 0 according to the stationary distribution of process IP(2). The chosen coupling is one which satisfies the coupling equality in the definition of total-variation. Observe that, crucially, by using a union bound, we can use $k-1$ different couplings (which need not be related to one another in any way), each of which involves just 2 particles.  
  Let $A_{s,i}$ denote the event that the coupling of the $i$th and $k$th particles is successful at time $s$. Then we can write
 \begin{align}\label{eq:JA}
 \PP[J_{s,i}^c]\le \PP[J_{s,i}^c,\,A_{s,i}]+\PP[A_{s,i}^c].
 \end{align} 
 Let $(y,x)$ be the initial location of the $(i,k)$th particles. Since $s= \tmix^{\mathrm{IP}(2)}(\frac{\varepsilon}{16k^2})$, for $k\ge 3$ we have \begin{align*}
 \PP[A_{s,i}^c]&=\|\mathcal{L}[I_s((y,x))]-\pi^{\mathrm{IP}(2)}\|\tv\le \frac{\varepsilon}{16 k^2}.
\end{align*}

We also need to upper-bound $\PP[J^c_{s,i},\,A_{s,i}]$.  For $x,y\in V$, let $T_{s}(x,y)$ denote the number of times that two particles evolving as  IP(2) started from vertices $x$ and $y$ interact during time interval $[0,s]$. By Markov's inequality we have the bound
\[
\PP[J^c_{s,i},\,A_{s,i}]\le \sum_{(x,y)\in(V)_2}\frac1{n(n-1)}\EE[T_{s}(x,y)].
\]
We can bound this expectation via:
\begin{align*}
\sum_{(x,y)\in(V)_2}\frac1{n(n-1)}\EE[T_{s}(x,y)]&=\sum_{(x,y)\in (V)_2}\frac1{n(n-1)}\EE\Big[\int_0^s \sum_{\substack{e:\\I_t(x),I_t(y)\in e}}r_e\,\mathrm{d}t\Big]\\
&=\sum_e r_e \frac1{n(n-1)}\int_0^s \EE\Big[\sum_{x\in V}\indic{I_t(x)\in e}\sum_{\substack{y\in V\\y\neq x}}\indic{I_t(y)\in e}\,\mathrm{d}t\Big]\\
&\le \sum_e r_e \frac1{n(n-1)}\int_0^s |e|(|e|-1) \,\mathrm{d}t\\
&= \frac{s}{n(n-1)}\sum_e r_e|e|(|e|-1).
\end{align*}
Putting the two bounds into \eqref{eq:JA} and using \eqref{eq:Jdecomp}, we obtain
\[
\PP[J_s^c]\le (k-1)\left(\frac{\varepsilon}{16 k^2}+\tfrac{s}{n(n-1)}\sum_e r_e|e|(|e|-1)\right)\le \frac{\varepsilon}{16k }+\frac{sk}{n^2}\sum_er_e|e|(|e|-1). \; \qed
\]

Before stating the next lemma, we define  
\[\bar d_k(t):=\max_{{\bf w}\in (V)_{k-1},u,v\in V\setminus{{\bf O}({\bf w})}}\|\mathcal{L}[I_{t}(({\bf w},u))]-\mathcal{L}[I_{t}(({\bf w},v))]\|\tv.\]In the case $k=1$ this reduces to $\max_{u,v\in V}\|\mathcal{L}[I_t(u)]-\mathcal{L}[I_t(v)]\|\tv$.

 The next lemma formalises the following idea: if the $k$th particle in an interchange process is unlikely to interact  with any of the other $k-1$ particles for time $s$ sufficiently large then, conditionally on the trajectory of the first $k-1$ particles, the $k$th particle will be close to mixed. The idea of the lemma is that the usual submultiplicativity property of the worst case distance from equilibrium  can be extended to the notion $\bar d_k(t)$, provided one only considers couplings of $\mathcal{L}[I_{t}(({\bf w},u))]$ and $\mathcal{L}[I_{t}(({\bf w},v))]$ which take the same value in the first $k-1$ coordinates.
\begin{lemma} \label{L:submulti}
For any $s,t\ge0$, \[\bar d_k(s+t)\le \bar d_k(t)\left(2\max_{{\bf x}\in (V)_k}\left(1-\PP[J_{s/2}({\bf x})]\right)+\bar d_1(s/2)\right).\]
\end{lemma}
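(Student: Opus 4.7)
My plan is to combine a Markov decomposition at time $s$ with Lemma~\ref{L:mutvbound} to obtain a refined submultiplicativity-type inequality. Fix ${\bf w}\in (V)_{k-1}$ and $u,v\in V\setminus {\bf O}({\bf w})$, and write $P_t^{{\bf x}}:=\mathcal{L}[I_t({\bf x})]$.

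First I would apply the Markov property at time $s$:
\[P_{s+t}^{({\bf w},u)}-P_{s+t}^{({\bf w},v)}=\sum_{{\bf a}\in(V)_k}\bigl(P_s^{({\bf w},u)}({\bf a})-P_s^{({\bf w},v)}({\bf a})\bigr)P_t^{{\bf a}}.\]
Under the graphical coupling, the first $k-1$ coordinates of $I_s(({\bf w},u))$ and $I_s(({\bf w},v))$ coincide, so writing ${\bf a}=({\bf a}',a_k)$ and conditioning on $I_s({\bf w})={\bf a}'$, the bracket equals $\PP[I_s({\bf w})={\bf a}']\cdot(\nu_u^{{\bf a}'}(a_k)-\nu_v^{{\bf a}'}(a_k))$, where $\nu_u^{{\bf a}'}(\cdot):=\PP[I_s(u)=\cdot\mid I_s({\bf w})={\bf a}']$ and $\nu_v^{{\bf a}'}$ is defined analogously. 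For each ${\bf a}'$, the Jordan decomposition of $\nu_u^{{\bf a}'}-\nu_v^{{\bf a}'}$ paired with a coupling of its positive and negative parts expresses $\sum_{a_k}(\nu_u^{{\bf a}'}(a_k)-\nu_v^{{\bf a}'}(a_k))P_t^{({\bf a}',a_k)}$ as $\|\nu_u^{{\bf a}'}-\nu_v^{{\bf a}'}\|\tv$ times a convex combination of differences $P_t^{({\bf a}',c)}-P_t^{({\bf a}',c')}$; since these pairs of configurations differ only in the last coordinate, each has TV at most $\bar d_k(t)$. Summing over ${\bf a}'$ and upgrading the conditioning from $I_s({\bf w})={\bf a}'$ to the full trajectory $\gamma:=(I_r({\bf w}))_{0\le r\le s}$ (conditioning on finer information only increases the expected TV), I obtain
\[\|P_{s+t}^{({\bf w},u)}-P_{s+t}^{({\bf w},v)}\|\tv\le \bar d_k(t)\cdot \EE\bigl[\|\mathcal{L}[I_s(u)\mid\gamma]-\mathcal{L}[I_s(v)\mid\gamma]\|\tv\bigr].\]

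To bound the inner expectation I would split $s=s/2+s/2$ and apply Lemma~\ref{L:mutvbound} with its $s$-parameter set to $s/2$: conditioning further on $I_{s/2}(u)=c$, the law $\mu_{s/2}^{\gamma,u,c}=\mathcal{L}[I_s(u)\mid I_{s/2}(u)=c,\gamma]$ is within $1-\PP[J_{s/2}(({\bf w},u))\mid\gamma,I_{s/2}(u)=c]$ in TV of the free RW$(1)$ law $\mathcal{L}[I_{s/2}(c)]$. Averaging over $c$ shows that $\mathcal{L}[I_s(u)\mid\gamma]$ lies within $\PP[J_{s/2}(({\bf w},u))^c\mid\gamma]$ in TV of the proxy $\lambda_u^{\gamma}:=\sum_c\PP[I_{s/2}(u)=c\mid\gamma]\,\mathcal{L}[I_{s/2}(c)]$, and symmetrically for $v$. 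Applying the signed-measure/coupling trick once more to $\mathcal{L}[I_{s/2}(u)\mid\gamma]-\mathcal{L}[I_{s/2}(v)\mid\gamma]$ (and bounding each $\|\mathcal{L}[I_{s/2}(c)]-\mathcal{L}[I_{s/2}(c')]\|\tv\le \bar d_1(s/2)$) gives $\|\lambda_u^\gamma-\lambda_v^\gamma\|\tv\le \bar d_1(s/2)$. The triangle inequality through $\lambda_u^\gamma,\lambda_v^\gamma$ followed by expectation over $\gamma$ (using $\EE[\PP[J_{s/2}(({\bf w},u))^c\mid\gamma]]=\PP[J_{s/2}(({\bf w},u))^c]\le \max_{{\bf x}}(1-\PP[J_{s/2}({\bf x})])$, and similarly for $v$) yields
\[\EE\bigl[\|\mathcal{L}[I_s(u)\mid\gamma]-\mathcal{L}[I_s(v)\mid\gamma]\|\tv\bigr]\le \bar d_1(s/2)+2\max_{{\bf x}\in(V)_k}\bigl(1-\PP[J_{s/2}({\bf x})]\bigr),\]
and combining with the previous display and maximising over ${\bf w},u,v$ gives the claim.

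The main obstacle is the bookkeeping in the first step: verifying that the graphical coupling allows the signed-measure decomposition of $P_s^{({\bf w},u)}-P_s^{({\bf w},v)}$ to be factored through measures supported on configurations agreeing in their first $k-1$ coordinates, which is precisely what permits invoking $\bar d_k(t)$ rather than a coarser distance on $(V)_k$. Once this is in place, the second step is a triangle inequality around a cleverly chosen proxy, combined with Lemma~\ref{L:mutvbound} and one further standard signed-measure/coupling step.
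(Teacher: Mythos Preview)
Your proof is correct and follows essentially the same approach as the paper's: a submultiplicativity step at time $s$ exploiting that the first $k-1$ coordinates can be kept matched (so that the factor $\bar d_k(t)$ appears), followed by a triangle inequality through RW(1) proxy laws and an application of Lemma~\ref{L:mutvbound}. The only cosmetic difference is that you phrase the first step via a signed-measure/Jordan decomposition and mixture proxies $\lambda_u^\gamma,\lambda_v^\gamma$, whereas the paper uses the coupling characterization of TV distance and conditions additionally on $x(s/2),y(s/2)$ so that the proxies are simply $\mathcal{L}[I_{s/2}(x(s/2))]$, $\mathcal{L}[I_{s/2}(y(s/2))]$.
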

\begin{proof}
Let ${\bf z}\in (V)_{k-1}, x,y\in V\setminus{\bf O}({\bf z})$ and write ${\bf x}=({\bf z},x),\,{\bf y}=({\bf z},y)$. Then for any $s,t\ge0$, 
\[
\|\mathcal{L}[I_{s+t}({\bf x})]-\mathcal{L}[I_{s+t}({\bf y})]\|\tv\le \mathbb{E}\left[\|\mathcal{L}[I_{t}({\bf x}(s))]-\mathcal{L}[I_{t}({\bf y}(s))]\|\tv\right]
\]
for any coupling $({\bf x}(s),{\bf y}(s))=(({\bf z}(s),x(s)),({\bf z}(s),y(s)))$ where $\mathcal{L}[{\bf x}(s)]=\mathcal{L}[I_s({\bf x})]$ and $\mathcal{L}[{\bf y}(s)]=\mathcal{L}[I_s({\bf y})]$. The coupling we choose will be one which keeps the first $k-1$ coordinates matched in the two processes (i.e evolves the first $k-1$ particles identically) and moreover, this coupling will depend on the trajectory of the first $k-1$ particles.\footnote{We clarify that we do not couple the dynamics performed by the $k$th particles in the two systems by time $s$. We only couple them at time $s$, in a manner that depends on the trajectories of the rest of the $k-1$ particles by time $s$.} Note that the quantity inside the expectation is zero if ${\bf x}(s)={\bf y}(s)$ and is always bounded by $\bar d_k(t)$ (as we keep the first $k-1$ coordinates equal). 
Hence 
\[
\bar d_k(s+t)\le \bar d_k(t)\PP[{\bf x}(s)\neq{\bf y}(s)].
\]

By our choice of coupling we can write
\begin{align*}
\PP[{\bf x}(s)\neq{\bf y}(s)]&=\EE\left[\PP[{\bf x}(s)\neq {\bf y}(s)\mid x(s/2),y(s/2),({\bf z}(t))_{0\le t\le s}]\right]\\
&=\EE\left[\PP[x(s)\neq y(s)\mid x(s/2),y(s/2),({\bf z}(t))_{0\le t\le s}]\right].
\end{align*}
Given the trajectory $({\bf z}(t))_{0\le t\le s}$, the coupling we choose is that which attains equality in the definition of total-variation, that is, the one which allows us to write the above as
\[
\PP[x(s)\neq y(s)\mid x(s/2),y(s/2),({\bf z}(t))_{0\le t\le s}]=\EE\left[\|\mu-\nu\|\tv\right],
\]
where $\mu=\mu_s^{({\bf z}(t))_{0\le t\le s},x,x(s/2)}$ and $\nu=\mu_s^{({\bf z}(t))_{0\le t\le s},y,y(s/2)}$. Next, by the triangle inequality we have
\begin{align}\label{eq:tvtriangle}\begin{split}
\EE\left[\|\mu-\nu\|\tv\right]&\le \EE\left[\|\mu-\mathcal{L}[I_{s/2}(x({s/2}))]\|\tv\right]+\EE\left[\|\nu-\mathcal{L}[I_{s/2}(y({s/2}))]\|\tv\right]\\
&\phantom{\le}+\max_{x,y}\|\mathcal{L}[I_{s/2}(x)]-\mathcal{L}[I_{s/2}(y)]\|\tv.\end{split}
\end{align}
The first two expectations on the right-hand side can be bounded using Lemma~\ref{L:mutvbound}:
\begin{align*}
\EE\left[\|\mu-\mathcal{L}[I_{s/2}(x({s/2}))]\|\tv\right]&\le 1-\EE\left[\PP[J_{s/2}(({\bf z},x))\mid x({s/2}),\, ({\bf z}(t))_{0\le t\le s}]\right]\\&=1-\PP[J_{s/2}(({\bf z},x))],
\end{align*}
and similarly
\[
\EE\left[\|\nu-\mathcal{L}[I_{s/2}(y({s/2}))]\|\tv\right]\le 1-\PP[J_{s/2}(({\bf z},y))].
\]
The third term on the right-hand side of \eqref{eq:tvtriangle} is simply $\bar d_1(s/2)$.
\end{proof}

\begin{proof}
[Proof of Theorem~\ref{T:main}]
For the first part of the statement, we need to show that  if $s\ge c\tmix^{\mathrm{IP}(2)}(\varepsilon) $ in the case $\varepsilon k^{-1}\ge 2\delta$ and if $s\ge c\tmix^{\mathrm{IP}(2)}(1/k)\log_{1/\delta}(k/\varepsilon)$ in the case $\varepsilon k^{-1}<2\delta$, for some universal $c>0$, then
\begin{align}\label{e:tvbound}
\max_{{\bf x},{\bf y}\in (V)_k}\|\mathcal{L}[I_{s}({\bf x})]-\mathcal{L}[I_{s}({\bf y})]\|\tv\le \varepsilon.
\end{align}
In order to apply Lemma~\ref{L:submulti} we must reduce the above total-variation distance to one between initial configurations which differ in a single coordinate. This is achieved via the triangle inequality. 

Suppose ${\bf x},\,{\bf y}\in(V)_k$ are arbitrary. Note that there exists a sequence ${\bf x}=:{\bf x}_0,{\bf x}_1,\ldots,{\bf x}_r:={\bf y}$ for some $r\le k$ with ${\bf x}_i\in (V)_k$ and such that ${\bf x}_i$ differs in one coordinate from ${\bf x}_{i+1}$ for all $0\le i<r$. Hence for any $s>0$,
\begin{align}
\|\mathcal{L}[I_s({\bf x})]-\mathcal{L}[I_s({\bf y})]\|\tv\le
 \sum_{i=1}^r\|\mathcal{L}[I_s({\bf x}_{i-1})]-\mathcal{L}[I_s({\bf y}_{i-1})]\|\tv.\label{e:reduce}
\end{align}

So now suppose that ${\bf x}=({\bf z},x),\,{\bf y}=({\bf z},y)\in(V)_k$ differ in just one coordinate, which, without loss of generality, we assume is the $k$th coordinate.
Then by repeated application of Lemma~\ref{L:submulti}, for any $m\in\mathbb{N}$ and $t>0$,
\begin{align*}
\|\mathcal{L}[I_{mt}({\bf x})]-\mathcal{L}[I_{mt}({\bf y})]\|\tv\le \left(2\max_{{\bf x}\in (V)_k}\PP[J_{t/2}^c({\bf x})]+\bar d_1(t/2)\right)^m.
\end{align*}

We now set $t=2\tmix^{\mathrm{IP}(2)}(\frac{\varepsilon}{16k^2})$ (which by submultiplicativity of mixing times is at most $ 20\tmix^{\mathrm{IP}(2)}(\varepsilon)$, since $\varepsilon\le \frac1{k}\wedge\frac14$)  so that $\bar d_1(t/2)\le \varepsilon/(8k^2)$. By Lemma~\ref{L:probJ} we thus have
\[
\|\mathcal{L}[I_{mt}({\bf x})]-\mathcal{L}[I_{mt}({\bf y})]\|\tv\le \left(\frac{\varepsilon}{4k}+\frac{tk}{n^2}\sum_er_e|e|(|e|-1) \right)^m\le \left(\frac{\varepsilon}{4k}+\frac{\delta}{2}\right)^m,
\]where the last inequality follows by noting
\begin{align*}\frac{\delta}{2}&=4kn^{-2}\tmix^{\mathrm{IP}(2)}(\varepsilon/(8k))\sum_e r_e|e|(|e|-1) \ge 2kn^{-2}\tmix^{\mathrm{IP}(2)}(\varepsilon/(16k^2))\sum_e r_e|e|(|e|-1) \\ &=tkn^{-2}\sum_e r_e|e|(|e|-1).
\end{align*}
If $\varepsilon k^{-1}\ge 2\delta$  we take $m=1$. Otherwise we take $m=\lceil \log_{1/\delta}(k/\varepsilon)\rceil$ (recall our assumption that $\delta\le 1$). In each case taking $s=mt$ in \eqref{e:reduce} we deduce that for arbitrary ${\bf x},\,{\bf y}\in (V)_k$,
\[
\|\mathcal{L}[I_s({\bf x})]-\mathcal{L}[I_s({\bf y})]\|\tv\le \varepsilon,
\]
which completes the proof of \eqref{e:tvbound}.

It remains to prove~\eqref{e:Caputomix} and~\eqref{e:Caputo1}. Using submultiplicativity of mixing times, provided that $n \ge n_0(b)$ the condition  $Rkn^{-2}\tmix^{\mathrm{IP}(2)}(n^{-b})\le n^{-b}$ implies that
\begin{equation}
\label{e:n1plus2b}
8Rkn^{-2}\tmix^{\mathrm{IP}(2)}\left(\frac{n^{-b/2}}{8k}\right) \le 8Rkn^{-2}\tmix^{\mathrm{IP}(2)}\left(\frac{1}{8n^{1+b/2}}\right) \le n^{-b/2} . 
 \end{equation}
Hence by \eqref{e:tvbound} we have that
\[\tmix^{\mathrm{IP}(k)}(n^{-b/2}) \le C\tmix^{\mathrm{IP}(2)}(n^{-b/2}).  \]
Hence  \eqref{e:Caputomix} for $\eps \le n^{-b/2}$  follows from \eqref{e:smalleps2} and \eqref{e:Caputo1} follows from \eqref{e:smalleps3}. To obtain \eqref{e:Caputomix}  for  $\eps \in (n^{-b/2},\frac{1}{4} \wedge \frac{1}{k})$, we note that similarly to  \eqref{e:n1plus2b} we have that if $n \ge n_0(b)$ then 
\[8Rkn^{-2}\tmix^{\mathrm{IP}(2)}\left(\frac{\eps}{8k}\right) \le  n^{-b/2} . \]
Hence by \eqref{e:tvbound} we have that $\tmix^{\mathrm{IP}(k)}(\eps) \le C\tmix^{\mathrm{IP}(2)}(\eps) $.   
\end{proof}
\begin{rmk}\label{rmk:naive}
Under stronger conditions on $k$ the argument just presented  could be simplified. Consider, for example, a burn-in period of duration  $s=C\tmix^{\mathrm{IP(2)}}(1/k)$. Then it can be shown that if $k \delta \le \frac{1}{32} $, with probability bounded away from zero, no pair of particles interact  during time interval $[s,2s]$. This would lead to the bound $\tmix^{\mathrm{IP(k)}} \lesssim \tmix^{\mathrm{IP(2)}}(1/k)$. 
\end{rmk}
\section{Proof of bound for vertex-transitive graphs: Theorem~ \ref{thm:transitive}}
\begin{proof}[Proof of Theorem~\ref{thm:transitive}]
If $n^{1/12}<k\le n^a$ we appeal to Theorem 1.2 of \cite{HPexclusion}  (which holds also for interchange provided $k\le n/2$, despite being stated for the exclusion process) to obtain $t_\mathrm{mix}^{\mathrm{IP}(k)}(\varepsilon)\lesssim_a t_\mathrm{rel}\log (n/\varepsilon)$  for all $n$ sufficiently large (depending on $a$ -- to guarantee $n^a\le n/2$). On the other hand we have (using \eqref{e:RW1mixtrel}) $t_\mathrm{mix}^{\mathrm{RW}(1)}(\varepsilon)\gtrsim t_\mathrm{rel}|\log (2\varepsilon)|\gtrsim \trel\log(n/\varepsilon)$, for $\varepsilon\le k^{-1}$ and $k> n^{1/12}$ which completes the proof in this regime. So for the rest of the proof we suppose that $k\le n^{1/12}$.

We consider two cases depending on the growth rate of the diameter $D$ of the vertex-transitive graph $G$. Suppose first that $D>n^{1/3}$ so that for all $n$ sufficiently large (depending on $d$), we have $D\ge (n/d)^{1/4}$.\footnote{If $d \ge n^{1/3}$ one can bound the mixing time e.g.\ using the bound from \cite{Alon}, and for such large $d$ this bound can be completely absorbed into the constant which depend on $d$.}  Then by Corollary 2.8 of \cite{TandT} we know that there exist constants $A,B>0$ such that (provided $n\ge n_0(d)$) $G$ has $(A,B)$-moderate growth (in the sense described in \cite{moderate}). It then follows from Proposition 11.1 of \cite{HPexclusion} (the proposition as stated there is for the $1/4$-mixing time of the exclusion process but the upper-bound holds also for interchange on $k\le n/2$ particles and the proof carries over to the $\varepsilon$-mixing time for $\varepsilon\le k^{-1}$) that, uniformly in $k\le n^{1/12}$, $t_\mathrm{mix}^{\mathrm{IP}(k)}(\varepsilon)\lesssim_d D^2\log(1/\varepsilon)\asymp \trel\log(1/\varepsilon)\lesssim_d t_\mathrm{mix}^{\mathrm{RW}(1)}(\varepsilon)$ provided $n\ge n_0(d)$.

Now suppose that $D\le n^{1/3}$. We use the following result on vertex transitive graphs (to appear in a future work of Nathana\"el Berestycki, the first author, and Lucas Teyssier; as mentioned in the introduction, the credit for this result is  due to Tessera and Tointon \cite{TandTsharp}, as this bound is a  consequence of their bound on the isoperimetric profile and the generic evolving sets bound on the return probability \cite{evolving}):
\begin{prop}[Berestycki, Hermon, Teyssier]
\label{p:BHT}
There exist $C(d,m)$ such that for over all vertex-transitive graphs of size $n$ and degree $d$ satisfying that $n \ge D^q$ with $\lfloor q \rfloor=m$, writing $R:=D^{q-m}$, for all $t \le D^2$ and  every vertex $x$ we have that
\[p_t(x,x)\le C(d,m)  \left( \frac{1}{t^{(m+1)/2}} \vee \frac{1}{Rt^{m/2}} \right). \] 

In particular, if $n\ge D^3$, then uniformly over $t\le D^2$, $ p_t(x,x)\lesssim_d t^{-3/2}$.
\end{prop}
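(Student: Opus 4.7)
The plan is to combine the two ingredients flagged in the parenthetical before the statement: (i) the sharp isoperimetric inequality for vertex-transitive graphs due to Tessera and Tointon \cite{TandTsharp}, and (ii) the Morris-Peres evolving-sets bound \cite{evolving}, which converts isoperimetric profile information into return-probability upper bounds.

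The first step is to extract from \cite{TandTsharp} a two-regime lower bound on the isoperimetric profile $\Phi_*(v):=\inf_{|S|\le v}|\partial S|/|S|$, valid for $v \le n/2$ under the hypothesis $n \ge D^q$, of the schematic form
\[
\Phi_*(v) \;\gtrsim_{d,m}\;
\begin{cases}
v^{-1/(m+1)} & \text{if } v \le R^{m+1},\\[2pt]
R^{1/m}\, v^{-1/m} & \text{if } R^{m+1} \le v \le n/2.
\end{cases}
\]
The two expressions agree (up to constants depending on $d,m$) at the crossover $v=R^{m+1}$, and together reflect the transition from $(m{+}1)$-dimensional volume growth at small scales to $m$-dimensional growth in a ``slab'' of thickness $R$ in the remaining direction; the model case to keep in mind is $\mathbb{Z}^{m}\times\{0,1,\dots,R\}$.

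The second step is to apply the Morris-Peres inequality, which asserts that for a reversible chain,
\[
p_t(x,x)-\pi(x)\;\le\;\eps \qquad\text{whenever}\qquad t \;\ge\;\int_{4\pi(x)}^{4/\eps}\frac{4\,du}{u\,\Phi_*(u)^2}.
\]
For a vertex-transitive graph $\pi(x)=1/n$, so the lower limit is $4/n$. Plugging in the profile above and computing: when $4/\eps\le R^{m+1}$ the integral is $\asymp_m (1/\eps)^{2/(m+1)}$, which upon inversion gives $\eps\lesssim_{d,m} t^{-(m+1)/2}$; when $4/\eps>R^{m+1}$ the large-scale regime contributes $\asymp_m R^{-2/m}(1/\eps)^{2/m}$ (and dominates the small-scale contribution once $1/\eps\gg R^{m+1}$), giving $\eps\lesssim_{d,m} R^{-1}t^{-m/2}$. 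Taking the maximum yields the stated two-term bound; the constraint $t\le D^2$ together with $n\ge D^{q}\ge R\, D^{m}$ ensures that the $1/n$ contribution is absorbed by the RHS, so we may write the upper bound for $p_t(x,x)$ rather than $p_t(x,x)-1/n$.

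The main obstacle is the first step, namely translating the results of \cite{TandT, TandTsharp} (phrased in terms of ball volumes and $L^p$-isoperimetric profiles matching those of ranks-of-growth at scale $D$) into the clean two-regime lower bound on $\Phi_*$ stated above; once this is in hand the evolving-sets computation is elementary. The ``in particular'' assertion is the specialisation $q=3$, $m=2$, $R=1$, in which both terms of the bound collapse to $t^{-3/2}$.
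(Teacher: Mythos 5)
Note first that the paper does not actually prove Proposition~\ref{p:BHT}: it is imported from forthcoming work of Berestycki, Hermon and Teyssier, with the parenthetical attribution that it follows from the isoperimetric profile bounds of Tessera--Tointon \cite{TandTsharp} combined with the evolving-sets return-probability bound of \cite{evolving}. Your proposal follows exactly this indicated route, and your second step is sound: with the two-regime profile $\Phi_*(v)\gtrsim_{d,m} v^{-1/(m+1)}$ for $v\le R^{m+1}$ and $\Phi_*(v)\gtrsim_{d,m} R^{1/m}v^{-1/m}$ above the crossover, the Morris--Peres integral does give $t\asymp \eps^{-2/(m+1)}$ in the small-$1/\eps$ regime and $t\asymp R^{-2/m}\eps^{-2/m}$ in the large one, and the observation that $n\ge RD^m$ with $t\le D^2$ lets you absorb the $1/n$ term, so inverting yields the stated maximum of the two terms. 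However, the first step --- extracting that clean two-regime lower bound on $\Phi_*$ from \cite{TandT,TandTsharp} --- is precisely the nontrivial content of the cited forthcoming work, and you leave it as an acknowledged ``schematic'' assumption; so, like the paper, your write-up contains the evolving-sets reduction but not a proof of the key isoperimetric input. This is a gap relative to a self-contained proof, though not relative to what the paper itself supplies.

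One concrete slip: in the ``in particular'' step your parameters are inconsistent. If $n\ge D^3$ and you take $q=3$ then $m=\lfloor q\rfloor=3$ and $R=D^{q-m}=1$, giving $t^{-2}\vee t^{-3/2}\lesssim t^{-3/2}$; alternatively you may take $m=2$, but then $R=D^{3-2}=D$ and you must use $t\le D^2$ to get $\frac{1}{Rt^{m/2}}=\frac{1}{Dt}\le t^{-3/2}$. With your stated choice $m=2$, $R=1$ the bound only gives $t^{-1}$, which is weaker than claimed; the conclusion is easily repaired by either of the two correct specialisations.
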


In order to apply Theorem~\ref{thm:transiance} we also need to verify that $t_\mathrm{rel}^{\mathrm{RW}(1)}\le n^{1-2a}$ for some $a>0$. The diameter bound  on the relaxation time gives $t_\mathrm{rel}^{\mathrm{RW}(1)}\le 2D^2\le 2 n^{2/3}\le n^{3/4}$ for $n$ sufficiently large \cite[Theorem 13.26]{levin} (because $r_e\equiv 1$ rather than $r_e \equiv 1/d$ the factor $d$  in the reference can be removed). Thus by Theorem~\ref{thm:transiance} (and using the remark that follows it) we deduce that if $D^2\ge 2\alpha t_\mathrm{mix}^{\mathrm{RW}(1)}(\varepsilon)$ (where $\alpha=\alpha(d)$ is determined in the proof of Theorem~\ref{thm:transiance}) then for all $n$ sufficiently large (depending on $d$), and all $3\le k\le n^{1/12}$ we have $t_\mathrm{mix}^{\mathrm{IP}(k)}(\varepsilon)\lesssim_d t_\mathrm{mix}^{\mathrm{RW}(1)}(\varepsilon)$. 
On the other hand, if $D^2<2\alpha t_\mathrm{mix}^{\mathrm{RW}(1)}(\varepsilon)=:2s$, then we apply Proposition \ref{p:BHT}   at time $D^2$ (namely, $\max_x(p_{D^2}(x,x)\lesssim D^{-3} $) together with the Poincar\'e inequality (between times $D^2$ and $s \wedge D^2$) and the diameter upper bound on the relaxation time to deduce that
\[\max_x(p_{2s}(x,x)-1/n)\lesssim_d  \left(1\wedge e^{-2(s-D^2)/t_\mathrm{rel}}\right)D^{-3}\le \left(1\wedge e^{-(s-D^2)/D^2}\right)D^{-3}.\]
To complete the proof it suffices to show that the right-hand side above is $\lesssim_d (2s)^{-(1+\theta)}$ for some $\theta>0$. If $s<D^2$ then $D^{-3}\lesssim (2s)^{-3/2}$ as needed. So suppose $s>D^2$. 
We can work on the additional assumption that $D^2> D_0$ for any constant $D_0$ since, for fixed $d$, there are finitely many graphs with $D^2\le D_0$ and (as $r_e\equiv 1$) finitely many Markov processes, and so the result is trivial in this case. 
 It remains to show
\[
 (1+\theta)\log s\le \frac{s}{D^2}+3\log D,
\]
for some $\theta>0$ and $D$ sufficiently large. 
However this is clear: if $3\log D\ge sD^{-2}$ then $\log s\le 2\log D+3\log\log D\le \frac52\log D$ (for $D$ sufficiently large), so $\frac65\log s\le 3\log D\le sD^{-2}+3\log D;$ if $3\log D< sD^{-2}$ then (for $D$ sufficiently large)\[\frac{5}{4}\log s<\frac{5}{4}\frac{s\log(3D^2\log D)}{3D^2\log D}<\frac{s}{D^2}.\qedhere
\]
\end{proof}
\section{Proof of Theorem~\ref{thm:transiance}}
\begin{proof}[Proof of Theorem~\ref{thm:transiance}]
We first consider the case $\varepsilon\in[n^{-a},\frac1{k}\wedge\frac14]$. 
In order to apply Theorem~\ref{T:main}, we  show that $\delta\le \varepsilon/(2k)$, where (as defined in the statement of Theorem~\ref{T:main}) $\delta:=8Rkn^{-2}\tmix^{\mathrm{IP}(2)}(\frac{\varepsilon}{8k})$. Under the maximal degree assumption, we have the bound $R:=n^{-1}\sum_e r_e|e|(|e|-1) \le 2d$ and so $\delta\le 16 dkn^{-2}\tmix^{\mathrm{IP}(2)}(\frac{\varepsilon}{8k})$. Using the general relations between $\tmix$ and $\trel$ (i.e.\ for reversible irreducible chains $t_\mathrm{mix}(\varepsilon)\le t_\mathrm{rel}\log(\frac1{\varepsilon\pi_\mathrm{min}})$) and the Caputo, Liggett and Richthammer Theorem \cite{Caputo} we have $\tmix^{\mathrm{IP}(2)}(\frac{\varepsilon}{8k})\le  t_\mathrm{rel}^\mathrm{IP(2)}\log(8kn^2/\varepsilon)= t_\mathrm{rel}^\mathrm{RW(1)}\log(8kn^2/\varepsilon)$. By the assumption on $\trel^\mathrm{RW(1)}$ we deduce that $\delta\le 16dkn^{-2}\log(8kn^2/\varepsilon)n^{1-2a}$. Now $k\le n^a$ and $\varepsilon\ge n^{-a}$, hence $\delta\le 16d k^{-1}n^{-1}\log (8kn^2/\varepsilon)\le \varepsilon/(2k),$ provided $n$ is sufficiently large (depending on $d$ and $a$). Thus it follows by Theorem~\ref{T:main} that there exists a universal $C>0$ such that for each $\varepsilon\in[n^{-a},\frac1{k}\wedge\frac14]$, \begin{align}\label{e:tmixkto2}
t_\mathrm{mix}^{\mathrm{IP}(k)}(\varepsilon)\le C t_\mathrm{mix}^\mathrm{IP(2)}(\varepsilon).\end{align} 

Next, set $s=\alpha t_\mathrm{mix}^\mathrm{RW(1)}(\varepsilon)$ for some $\alpha=\alpha(c,d,\theta)$ to be determined, and let $N_{(t_1,t_2)}(a,b)$ denote the number of interactions during time interval $(t_1,t_2)$ between particles started (at time 0) from vertices $a$ and $b$. Similarly let $\tilde N_{(t_1,t_2)}(a,b)$ denote the time that these particles are adjacent during interval $(t_1,t_2)$. Then as edges ring at rate 1, $\mathbb{E}[N_{(t_1,t_2)}(a,b)]=\mathbb{E}[\tilde N_{(t_1,t_2)}(a,b)]$ (this follows by the same reasoning as a similar statement in the proof of Lemma 5.9 in \cite{HPexclusion}, i.e.\! by noticing that interactions between $a$ and $b$ do not affect the unordered pair of trajectories $\{I_t(a),I_t(b)\}$). Thus for any $a,b\in V$,
\begin{align}
\mathbb{E}[N_{(s,2s)}(a,b)]&=\mathbb{E}\left[\int_{s}^{2s}\sum_x\sum_{y\sim x}\indic{I_{s}(a,b)=(x,y)}\,\mathrm{d}t\right]\notag\\
&\le\int_{s}^{2s}\sum_x\sum_{y\sim x}\mathbb{P}[I_t(a)\in\{x,y\},\,I_t(b)\in\{x,y\}]\mathrm{d}t\notag\\
&\le \int_{s}^{2s}\sum_x\sum_{y\sim x}\mathbb{P}[I_t(a)\in\{x,y\}]\,\mathbb{P}[I_t(b)\in\{x,y\}]\mathrm{d}t\notag\\
&\le \int_{s}^{2s}\sqrt{\left(\sum_x\sum_{y\sim x}\mathbb{P}[I_t(a)\in\{x,y\}]^2\right)\,\left(\sum_x\sum_{y\sim x}\mathbb{P}[I_t(b)\in\{x,y\}]^2\right)}\mathrm{d}t\notag\\
&\le \int_{s}^{2s}\sqrt{\left(\sum_x\sum_{y\sim x}\mathbb{P}[I_t(a)\in\{x,y\}]^2\right)\,\left(\sum_x\sum_{y\sim x}\mathbb{P}[I_t(b)\in\{x,y\}]^2\right)}\mathrm{d}t\notag\\
&\le \int_{s}^{2s}\sqrt{\left(\sum_x\sum_{y\sim x}\left(2p_t(a,x)^2+2p_t(a,y)^2\right)\right)\,\left(\sum_x\sum_{y\sim x}\left(2p_t(b,x)^2+2p_t(b,y)^2\right)\right)}\mathrm{d}t\notag
\end{align}
where the second inequality follows from the negative correlation property of the exclusion process and the third is by Cauchy-Schwarz. 
Now we observe that by reversibility for each $a\in V$,
\[
\sum_x\sum_{y\sim x}\left(p_t(a,x)^2+p_t(a,y)^2\right)=d\sum_xp_t(a,x)^2+d\sum_yp_t(a,y)^2=2dp_{2t}(a,a).
\]
Using this in the previous display we obtain
\begin{align}
\mathbb{E}[N_{(s,2s)}(a,b)]&\le 4d\int_s^{2s}\max_zp_{2t}(z,z)\mathrm{d}t\le 4d\left(\frac{s}{n}+s\big(\max_z p_{2s}(z,z)-\frac{1}{n}\big)\right).\label{e:EN}
\end{align}

By assumption \ref{e:HKsimpler},
\[
2s\le c^{(1+\theta)^{-1}}\Big(\max_zp_{2s}(z,z)-\frac1{n}\Big)^{-(1+\theta)^{-1}}
\]
from which we obtain
\[
2s\Big(\max_zp_{2s}(z,z)-\frac1{n}\Big)\le c^{(1+\theta)^{-1}} \Big(\max_zp_{2s}(z,z)-\frac1{n}\Big)^{\theta(1-\theta)}.
\]
The Poincar\'e inequality gives that for reversible Markov chains, if $t\ge c_1\trel\log (1/\varepsilon)$ then \[\max_x p_t(x,x)-\frac1{n}\le \varepsilon^{2c_1}.\]
It follows that, as $2s=2\alpha t_\mathrm{mix}^\mathrm{RW(1)}(\varepsilon)\ge 2\alpha\trel^\mathrm{RW(1)}\log (1/\varepsilon)$, we have 
\[\max_z p_{2s}(z,z)-\frac1{n}\le \varepsilon^{4\alpha}\]and so \[
2s\Big(\max_zp_{2s}(z,z)-\frac1{n}\Big)\le c^{(1+\theta)^{-1}}\varepsilon^{4\alpha\theta(1-\theta)}.
\]
Using this in \eqref{e:EN} we obtain
\begin{align*}
\mathbb{E}[N_{(s,2s)}(a,b)]&\le \frac{4ds}{n}+2dc^{(1+\theta)^{-1}}\varepsilon^{4\alpha\theta(1-\theta)}= \frac{4d}{n}\alpha t_\mathrm{mix}^\mathrm{RW(1)}(\varepsilon)+2dc^{(1+\theta)^{-1}}\varepsilon^{4\alpha\theta(1-\theta)}\\
&\le \frac{4d}{n}\alpha\trel^\mathrm{RW(1)}\log (n/\varepsilon)+\frac{\varepsilon}{16}\le \frac{4d}{n}\alpha n^{1-2a}\log (n/\varepsilon)+\frac{\varepsilon}{16}\\
&\le \frac{\varepsilon}{8},
\end{align*}
using $\varepsilon\ge n^{-a}$, choosing an appropriately large $\alpha=\alpha(c,d,\theta)$, and provided $n$ is sufficiently large (depending on $a$ and $d$).

To complete the proof for this case, we apply Lemma~\ref{L:submulti} (taking the $k$ there to be 2), which gives that 
\[
\bar d_2(2s)\le 2\max_{{\bf x}\in (V)_2}(1-\mathbb{P}[J_s({\bf x})])+\bar d_1(s)\le 2\max_{{\bf x}\in (V)_2} \mathbb{E}[N_{(s,2s)}({\bf x})]+\frac{\varepsilon}{4}\le \frac{\varepsilon}{2},
\]
where we have used Markov's inequality in the second inequality above and possibly increased $\alpha$ so that by submultiplicativity $\bar d_1(s)\le \varepsilon/2$. It follows that  $t_\mathrm{mix}^\mathrm{IP(2)}(\varepsilon/2)\le 2s$ and combining with \eqref{e:tmixkto2} (and using submultiplicativity again) we deduce that there exists a constant $C'=C'(\alpha)$ such that provided $n$ is sufficiently large (depending on $a$ and $d$),
\[
t_\mathrm{mix}^{\mathrm{IP}(k)}(\varepsilon)\le C' t_\mathrm{mix}^\mathrm{RW(1)}(\varepsilon),
\]for each $\varepsilon\in[n^{-a},\frac1{k}\wedge\frac14]$.
It remains to consider the case $\varepsilon<n^{-a}$. By submultiplicativity, $\tmix^{\mathrm{IP}(k)}(\varepsilon)\lesssim \tmix^{\mathrm{IP}(k)}(n^{-a})\log_{n^a}(1/\varepsilon),$ and so by the result just demonstrated, we have $\tmix^{\mathrm{IP}(k)}(\varepsilon)\lesssim_\alpha \tmix^{\mathrm{RW}(1)}(n^{-a})\log_{n^a}(1/\varepsilon)\lesssim \trel \log n \log_{n^a}(1/\varepsilon)\lesssim \trel\log(1/\varepsilon)\lesssim \tmix^{\mathrm{RW}(1)}(\varepsilon)$ for $n$ sufficiently large.
\end{proof}

\section{Comparison of Dirichlet forms: Proof of Theorem~\ref{thm:Caputo}}
The majority of this section is devoted to the proof of~\eqref{e:trel2trel1}. Once we have this,  \eqref{e:Caputo12uniform} follows easily by combining it with \eqref{e:Caputo1}.  So we can now focus on the proof of~\eqref{e:trel2trel1}.

Since RW(1) and RW(2) have the same spectral-gap, our goal is to compare the Dirichlet forms associated with IP(2)  and RW(2). However the two do not have the same state space. To rectify this, we consider the auxiliary process   Q(2) -- the process obtained from RW(2) by observing it only when particles are at different locations (that is, we remove the times at which they are at the same location).  
We shall compare Dirichlet forms associated with IP(2) and Q(2). This suffices because of the following lemma.
\begin{lemma}
\label{lem:Q2RW2} For all finite hypergraphs $G=(V,E)$ and all rates $(r_e:r \in E)$ such that the associated uniform interchange process   with two particles is irreducible we have that $Q(2)$ is reversible and its stationary distribution is the uniform distribution on $(V)_2$. Moreover
\begin{equation}
\label{e:Q2RW2trel}
\trel^{Q(2)} \le  \trel^{\mathrm{RW}(2)}. 
\end{equation}
\end{lemma}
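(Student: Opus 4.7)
The plan is to realise $Q(2)$ as the trace of $\mathrm{RW}(2)$ on the subset $A:=(V)_2\subset V\times V$, and then to invoke the general principle that tracing a reversible Markov chain on a subset of its state space can only decrease the relaxation time. Since the trajectory of a single particle of $\mathrm{IP}(2)$ is also a $\mathrm{RW}(1)$ trajectory, the assumed irreducibility of $\mathrm{IP}(2)$ forces $\mathrm{RW}(1)$ to be irreducible on $V$, and hence $\mathrm{RW}(2)$ to be irreducible on $V\times V$. As the trace of an irreducible reversible chain, $Q(2)$ is itself reversible and irreducible, and its stationary distribution is the restriction of $\pi_{\mathrm{RW}(2)}$ to $A$ renormalised; since $\pi_{\mathrm{RW}(2)}$ is the product of two uniform distributions, this restriction is uniform on $(V)_2$.

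For \eqref{e:Q2RW2trel} I would use the variational (Poincar\'e) characterisation of $\trel$. For $g:A\to\mathbb{R}$, let $\hat g:V\times V\to\mathbb{R}$ denote the harmonic extension of $g$, defined by $\hat g|_A=g$ and $(L_{\mathrm{RW}(2)}\hat g)(v,v)=0$ for every $v\in V$; this is well defined because the sub-Markov operator obtained by restricting $L_{\mathrm{RW}(2)}$ to $A^c=\{(v,v):v\in V\}$ is invertible. Writing $L_{\mathrm{RW}(2)}$ in block form with respect to $(A,A^c)$, one identifies the generator of $Q(2)$ as the corresponding Schur complement; combined with the harmonicity of $\hat g$ off $A$, this yields the Dirichlet-form identity
\[
\mathcal{E}_{Q(2)}(g,g)=\frac{1}{\pi_{\mathrm{RW}(2)}(A)}\,\mathcal{E}_{\mathrm{RW}(2)}(\hat g,\hat g).
\]
Combining this with the Poincar\'e inequality $\mathcal{E}_{\mathrm{RW}(2)}(\hat g,\hat g)\ge (\trel^{\mathrm{RW}(2)})^{-1}\mathrm{Var}_{\pi_{\mathrm{RW}(2)}}(\hat g)$ and the variance comparison
\[
\mathrm{Var}_{\pi_{\mathrm{RW}(2)}}(\hat g)\ge \pi_{\mathrm{RW}(2)}(A)\,\mathrm{Var}_{\pi_{Q(2)}}(g),
\]
and taking the infimum over non-constant $g$, gives $(\trel^{Q(2)})^{-1}\ge (\trel^{\mathrm{RW}(2)})^{-1}$, which is~\eqref{e:Q2RW2trel}.

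The variance inequality is a short algebraic expansion: setting $p:=\pi_{\mathrm{RW}(2)}(A)$, $m:=\mathbb{E}_{\pi_{Q(2)}}[g]$, and letting $E,U$ denote the mean and variance of $\hat g|_{A^c}$ under $\pi_{\mathrm{RW}(2)}|_{A^c}$ renormalised, a direct computation gives
\[
\mathrm{Var}_{\pi_{\mathrm{RW}(2)}}(\hat g)-p\,\mathrm{Var}_{\pi_{Q(2)}}(g)=(1-p)U+p(1-p)(E-m)^2\ge 0.
\]
I expect the Dirichlet-form identity to be the main technical step: it reduces to the observation $\langle \hat g,L_{\mathrm{RW}(2)}\hat g\rangle_{\pi_{\mathrm{RW}(2)}}=\sum_{x\in A}\pi_{\mathrm{RW}(2)}(x)\,g(x)\,(L_{\mathrm{RW}(2)}\hat g)(x)$ (by harmonicity of $\hat g$ on $A^c$) together with the identification of $(L_{\mathrm{RW}(2)}\hat g)|_A$ with the Schur-complement generator applied to $g$; once this is in place the rest is a one-line chaining of inequalities.
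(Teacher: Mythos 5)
Your proposal is correct and takes essentially the same route as the paper: $Q(2)$ is the induced (trace) chain of $\mathrm{RW}(2)$ on $(V)_2$, reversibility and the uniform stationary distribution follow from tracing the reversible product chain, and \eqref{e:Q2RW2trel} is the induced-chain spectral gap comparison. The only difference is that the paper simply cites \cite[Theorem 13.16]{levin} (remarking that the discrete-time proof adapts), whereas you reprove that theorem directly in continuous time via the Schur-complement/harmonic-extension identity and the variance comparison, which is precisely the standard proof of the cited result.
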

\begin{proof}
It is easy to check that the generator of $Q(2)$ is symmetric and hence indeed it is reversible w.r.t.\ the uniform distribution on $(V)_2$, which below we denote by $\mu$.

For \eqref{e:Q2RW2trel} cf.\ \cite[Theorem 13.16]{levin} (the proof is written in discrete time, but only minor adaptations are needed for the continuous time case). 
\end{proof}

Write $q^{\mathrm{IP}(2)}(\cdot,\cdot)$ for the transition rates of IP(2). We first note that the Dirichlet form associated with IP(2) can be written as
\begin{align}
\mathcal{E}^{\mathrm{IP}(2)}(f,f)=&\frac12\sum_{{\bf a}\in(V)_2}\sum_{\substack{{\bf b}\in (V)_2\\{\bf b}\neq{\bf a}}}\frac1{\binom{n}{2}}q^{\mathrm{IP}(2)}({\bf a},{\bf b})\big(f({\bf a})-f({\bf b})\big)^2\notag\\=& 
\frac12\sum_{{\bf a}\in(V)_2}\sum_{\substack{{\bf b}\in (V)_2\\{\bf b}\neq{\bf a}}}\frac1{\binom{n}{2}}\big(f({\bf a})-f({\bf b})\big)^2\sum_{e:\,{\bf a},{\bf b}\in(e)_2}\frac{r_e}{|e|(|e|-1)}\label{eq:EIP2_1}\\
&+\frac12\sum_{{\bf a}\in(V)_2}\sum_{\substack{{\bf b}\in (V)_2\\{\bf b}\neq{\bf a}\\{\bf b}(1)={\bf a}(1)}}\frac1{\binom{n}{2}}\big(f({\bf a})-f({\bf b})\big)^2\sum_{\substack{e:\,{\bf a}(1)\notin e\\{\bf a}(2),{\bf b}(2)\in e}}\frac{r_e}{|e|}\label{eq:EIP2_2}\\
&+\frac12\sum_{{\bf a}\in(V)_2}\sum_{\substack{{\bf b}\in (V)_2\\{\bf b}\neq{\bf a}\\{\bf b}(2)={\bf a}(2)}}\frac1{\binom{n}{2}}\big(f({\bf a})-f({\bf b})\big)^2\sum_{\substack{e:\,{\bf a}(2)\notin e\\{\bf a}(1),{\bf b}(1)\in e}}\frac{r_e}{|e|}\label{eq:EIP2_3}.
\end{align}

Write $q^{\mathrm{Q}(2)}(\cdot,\cdot)$ for the transition rates of Q(2). The process Q(2) is reversible with uniform stationary distribution and Dirichlet form
\begin{align}
\mathcal{E}^{\mathrm{Q}(2)}(f,f)=&\frac12\sum_{{\bf x}\in(V)_2}\sum_{\substack{{\bf y}\in(V)_2\\{\bf y}\neq{\bf x}}}\frac1{\binom{n}{2}}q^{\mathrm{Q}(2)}({\bf x},{\bf y})\big(f({\bf x})-f({\bf y})\big)^2\notag\\
\le&\frac12\frac1{\binom{n}{2}}\sum_{{\bf x}\in(V)_2}\sum_{\substack{{\bf y}\in(V)_2\\{\bf y}\neq{\bf x}\\{\bf y}(1)={\bf x}(1)}}q^{\mathrm{Q}(2)}({\bf x},{\bf y})\big(f({\bf x})-f({\bf y})\big)^2\label{eq:EQ2_1}\\
&+\frac12\frac1{\binom{n}{2}}\sum_{{\bf x}\in(V)_2}\sum_{\substack{{\bf y}\in(V)_2\\{\bf y}\neq{\bf x}\\{\bf y}(2)={\bf x}(2)}}q^{\mathrm{Q}(2)}({\bf x},{\bf y})\big(f({\bf x})-f({\bf y})\big)^2\label{eq:EQ2_2}\\
&+\frac12\frac1{\binom{n}{2}}\sum_{{\bf x}\in(V)_2}\sum_{\substack{{\bf y}\in(V)_2\\{\bf y}\neq{\bf x}\\{\bf y}(2)={\bf x}(1)}}q^{\mathrm{Q}(2)}({\bf x},{\bf y})\big(f({\bf x})-f({\bf y})\big)^2\label{eq:EQ2_3}\\
&+\frac12\frac1{\binom{n}{2}}\sum_{{\bf x}\in(V)_2}\sum_{\substack{{\bf y}\in(V)_2\\{\bf y}\neq{\bf x}\\{\bf y}(1)={\bf x}(2)}}q^{\mathrm{Q}(2)}({\bf x},{\bf y})\big(f({\bf x})-f({\bf y})\big)^2.\label{eq:EQ2_4}
\end{align}
(this is an inequality rather than equality because we have, in the last two lines, double-counted the terms where ${\bf y}=({\bf x}(2),{\bf x}(1))$.
We show that each of these terms can be upper-bounded by a linear combination of the terms~\eqref{eq:EIP2_1}-\eqref{eq:EIP2_3}. 
\subsection{Bounding terms~\eqref{eq:EQ2_1} and~\eqref{eq:EQ2_2}}
The two terms~\eqref{eq:EQ2_1} and~\eqref{eq:EQ2_2} can be dealt with similarly (likewise for~\eqref{eq:EQ2_3} and~\eqref{eq:EQ2_4} in the next subsection). Term~\eqref{eq:EQ2_1} is equal to
\begin{align}\notag
&\frac12\frac1{\binom{n}{2}}\sum_{{\bf x}\in(V)_2}\sum_{\substack{{\bf y}\in(V)_2\\{\bf y}\neq{\bf x}\\{\bf y}(1)={\bf x}(1)}}q^{\mathrm{Q}(2)}({\bf x},{\bf y})\big(f({\bf x})-f({\bf y})\big)^2\\
&=\frac12\frac1{\binom{n}{2}}\sum_{{\bf x}\in(V)_2}\sum_{\substack{{\bf y}\in(V)_2\\{\bf y}\neq{\bf x}\\{\bf y}(1)={\bf x}(1)}}\big(f({\bf x})-f({\bf y})\big)^2\sum_{\substack{e:\,{\bf x}(1)\notin e\\{\bf x}(2),{\bf y}(2)\in e}}\frac{r_e}{|e|}\label{eq:simple1}\\
&+\frac12\frac1{\binom{n}{2}}\sum_{{\bf x}\in(V)_2}\sum_{\substack{{\bf y}\in(V)_2\\{\bf y}\neq{\bf x}\\{\bf y}(1)={\bf x}(1)}}\big(f({\bf x})-f({\bf y})\big)^2
\sum_{\substack{e:\,{\bf x}(1)\in e\\{\bf x}(2),{\bf y}(2)\in e}}\frac{r_e}{|e|}\label{eq:simple2}\\
&
+\frac12\frac1{\binom{n}{2}}\sum_{{\bf x}\in(V)_2}\sum_{\substack{{\bf y}\in(V)_2\\{\bf y}\neq{\bf x}\\{\bf y}(1)={\bf x}(1)}}\big(f({\bf x})-f({\bf y})\big)^2
\sum_{e:\,{\bf x}\in(e)_2}\frac{r_e}{2|e|}\sum_{e':\,{\bf x}(1),{\bf y}(2)\in e'}\frac{r_{e'}}{|e'|-1}\left(\sum_{\bar e:\,{\bf x}(1)\in\bar e}r_{\bar e}\right)^{-1}.\label{eq:compound}
\end{align}
Terms~\eqref{eq:simple1} and~\eqref{eq:simple2} correspond to the situation in which particle 2 jumps directly from ${\bf x}(2)$ to ${\bf y}(2)$. Note that \eqref{eq:simple1} is the same as~\eqref{eq:EIP2_2}. Term~\eqref{eq:compound} corresponds to particle 2 jumping first to the vertex occupied by particle 1 (which is not observed by Q(2) as at this point the two particles are at the same location), and then the next edge that rings containing ${\bf x}(1)$ (in RW(2)) is an edge which also has ${\bf y}(2)$ on it, and when it rings particle 2 jumps to ${\bf y}(2)$.

Term~\eqref{eq:simple2} requires a little manipulation. We can write it as
\begin{align*}
&\frac12\frac1{\binom{n}{2}}\sum_{{\bf x}\in(V)_2}\sum_{\substack{{\bf y}\in(V)_2\\{\bf y}\neq{\bf x}\\{\bf y}(1)={\bf x}(1)}}
\sum_{\substack{e:\,{\bf x}(1)\in e\\{\bf x}(2),{\bf y}(2)\in e}}\frac{r_e}{|e|}\frac{1}{\binom{|e|}{2}}\sum_{{\bf z}\in(e)_2}\big(f({\bf x})-f({\bf z})+f({\bf z})-f({\bf y})\big)^2\\
&\le\frac12\frac1{\binom{n}{2}}\sum_{{\bf x}\in(V)_2}\sum_{\substack{{\bf y}\in(V)_2\\{\bf y}\neq{\bf x}\\{\bf y}(1)={\bf x}(1)}}
\sum_{\substack{e:\,{\bf x}(1)\in e\\{\bf x}(2),{\bf y}(2)\in e}}\frac{r_e}{|e|}\frac{1}{\binom{|e|}{2}}\sum_{{\bf z}\in(e)_2}2\big[\big(f({\bf x})-f({\bf z})\big)^2+\big(f({\bf z})-f({\bf y})\big)^2\big]\\
&=\frac1{\binom{n}{2}}\sum_{{\bf x}\in(V)_2}\sum_{\substack{{\bf y}\in(V)_2\\{\bf y}\neq{\bf x}\\{\bf y}(1)={\bf x}(1)}}
\sum_{\substack{e:\,{\bf x}(1)\in e\\{\bf x}(2),{\bf y}(2)\in e}}\frac{r_e}{|e|}\frac{1}{\binom{|e|}{2}}\sum_{{\bf z}\in(e)_2}2\big(f({\bf x})-f({\bf z})\big)^2\\
&=\frac1{\binom{n}{2}}\sum_{{\bf x},{\bf z}\in(V)_2}\sum_{e:{\bf x},{\bf z}\in (e)_2}\frac{r_e}{|e|}\frac1{\binom{|e|}{2}}2\big(f({\bf x})-f({\bf z})\big)^2(|e|-1)\\
&=\frac{4}{\binom{n}{2}}\sum_{{\bf x},{\bf z}\in (V)_2}\big(f({\bf x})-f({\bf z})\big)^2\sum_{e:{\bf x},{\bf z}\in (e)_2}\frac{r_e}{|e|^2}.
\end{align*}
Observe that this is at most 8 times~\eqref{eq:EIP2_1}. 

Term \eqref{eq:compound} requires even more manipulation. For fixed ${\bf x}$ and ${\bf y}$, set ${\bf w}=({\bf y}(2),{\bf x}(2))$, ${\bf z}=({\bf y}(2),{\bf y}(1))$ and $\bar{\bf z}=({\bf z}(2),{\bf z}(1))$. Using that 
\[
\big(f({\bf x})-f({\bf y})\big)^2\le 3\left[\big(f({\bf x})-f({\bf w})\big)^2+\big(f({\bf w})-f({\bf z})\big)^2+\big(f({\bf z})-f({\bf y})\big)^2\right]
\]we decompose~\eqref{eq:compound} into three terms:
\begin{align}
&\frac32\frac1{\binom{n}{2}}\sum_{{\bf x}\in(V)_2}\sum_{\substack{{\bf w}\in(V)_2\\{\bf w}\neq{\bf x}\\{\bf w}(2)={\bf x}(2)}}\big(f({\bf x})-f({\bf w})\big)^2
\sum_{e:\,{\bf x}\in(e)_2}\frac{r_e}{2|e|}\sum_{e':\,{\bf x}(1),{\bf w}(1)\in e'}\frac{r_{e'}}{|e'|-1}\left(\sum_{\bar e:\,{\bf x}(1)\in\bar e}r_{\bar e}\right)^{-1}\label{eq:decomp1}\\
&+\frac32\frac1{\binom{n}{2}}\sum_{{\bf w}\in(V)_2}\sum_{\substack{{\bf z}\in(V)_2\\{\bf w}\neq{\bf z}\\{\bf w}(1)={\bf z}(1)}}\big(f({\bf w})-f({\bf z})\big)^2
\sum_{e:\,{\bf w}(2),{\bf z}(2)\in e}\frac{r_e}{2|e|}\sum_{e':\,{\bf z}\in (e')_2}\frac{r_{e'}}{|e'|-1}\left(\sum_{\bar e:\,{\bf z}(2)\in\bar e}r_{\bar e}\right)^{-1}\label{eq:decomp2}\\
&+\frac32\frac1{\binom{n}{2}}\sum_{{\bf z}\in(V)_2}\big(f({\bf z})-f(\bar{\bf z})\big)^2
\sum_{{\bf x}(2)\in V}\sum_{e:\,{\bf z}(2),{\bf x}(2)\in e}\frac{r_e}{2|e|}\sum_{e':\,{\bf z}\in (e')_2}\frac{r_{e'}}{|e'|-1}\left(\sum_{\bar e:\,{\bf z}(2)\in\bar e}r_{\bar e}\right)^{-1}.\label{eq:decomp3}
\end{align}
Terms~\eqref{eq:decomp1} and~\eqref{eq:decomp2} can be treated similarly. Firstly, \eqref{eq:decomp1} can be split into two terms depending on whether ${\bf x}(2)$ is in $e'$:
\begin{align}
&\frac32\frac1{\binom{n}{2}}\sum_{{\bf x}\in(V)_2}\sum_{\substack{{\bf w}\in(V)_2\\{\bf w}\neq{\bf x}\\{\bf w}(2)={\bf x}(2)}}\big(f({\bf x})-f({\bf w})\big)^2
\sum_{e:\,{\bf x}\in(e)_2}\frac{r_e}{2|e|}\sum_{\substack{e':\,{\bf x}(1),{\bf w}(1)\in e'\\{\bf x}(2)\in e'}}\frac{r_{e'}}{|e'|-1}\left(\sum_{\bar e:\,{\bf x}(1)\in\bar e}r_{\bar e}\right)^{-1}\notag\\
&+\frac32\frac1{\binom{n}{2}}\sum_{{\bf x}\in(V)_2}\sum_{\substack{{\bf w}\in(V)_2\\{\bf w}\neq{\bf x}\\{\bf w}(2)={\bf x}(2)}}\big(f({\bf x})-f({\bf w})\big)^2
\sum_{e:\,{\bf x}\in(e)_2}\frac{r_e}{2|e|}\sum_{\substack{e':\,{\bf x}(1),{\bf w}(1)\in e'\\{\bf x}(2)\notin e'}}\frac{r_{e'}}{|e'|-1}\left(\sum_{\bar e:\,{\bf x}(1)\in\bar e}r_{\bar e}\right)^{-1}\notag\\
&\le\frac34\frac1{\binom{n}{2}}\sum_{{\bf x}\in(V)_2}\sum_{\substack{{\bf w}\in(V)_2\\{\bf w}\neq{\bf x}\\{\bf w}(2)={\bf x}(2)}}\big(f({\bf x})-f({\bf w})\big)^2
\sum_{\substack{e':\,{\bf x}(1),{\bf w}(1)\in e'\\{\bf x}(2)\in e'}}\frac{r_{e'}}{|e'|-1}\label{eq:decomp1split1}\\
&\phantom{le}+\frac34\frac1{\binom{n}{2}}\sum_{{\bf x}\in(V)_2}\sum_{\substack{{\bf w}\in(V)_2\\{\bf w}\neq{\bf x}\\{\bf w}(2)={\bf x}(2)}}\big(f({\bf x})-f({\bf w})\big)^2
\sum_{\substack{e':\,{\bf x}(1),{\bf w}(1)\in e'\\{\bf x}(2)\notin e'}}\frac{r_{e'}}{|e'|-1}\label{eq:decomp1split2}
\end{align}
Now, \eqref{eq:decomp1split1} can be bounded in exactly the same way as~\eqref{eq:simple2}, and so (since we have a factor of $\frac34$ instead of $\frac12$ as in~\eqref{eq:simple2}) it is at most 12 times~\eqref{eq:EIP2_1}. Term~\eqref{eq:decomp1split2} is at most $3$ times~\eqref{eq:EIP2_3}, and thus~\eqref{eq:decomp1} is at most $12\mathcal{E}^{\mathrm{IP}(2)}(f,f)$. A similar argument gives the same bound for~\eqref{eq:decomp2} (splitting it depending on whether ${\bf w}(1)$ is in $e$). 

For~\eqref{eq:decomp3}, we can write it as
\begin{align}
&\frac32\frac1{\binom{n}{2}}\sum_{{\bf z}\in(V)_2}\big(f({\bf z})-f(\bar{\bf z})\big)^2
\sum_{e:\,{\bf z}(2)\in e}\frac{r_e}{2}\sum_{e':\,{\bf z}\in (e')_2}\frac{r_{e'}}{|e'|-1}\left(\sum_{\bar e:\,{\bf z}(2)\in\bar e}r_{\bar e}\right)^{-1}\notag\\
&=\frac34\frac1{\binom{n}{2}}\sum_{{\bf z}\in(V)_2}\big(f({\bf z})-f(\bar{\bf z})\big)^2
\sum_{e:\,{\bf z}\in (e)_2}\frac{r_{e}}{|e|-1}\notag\\
&\le \frac32\frac1{\binom{n}{2}}\sum_{{\bf z}\in(V)_2}\sum_{e:\,{\bf z}\in (e)_2}\frac{r_{e}}{|e|-1}\frac1{\binom{|e|}{2}}\sum_{{\bf v}\in (e)_2}
\Big\{\big(f({\bf z})-f({\bf v})\big)^2+\big(f({\bf v})-f(\bar{\bf z})\big)^2\Big\}
\notag\\
&= \frac{3}{\binom{n}{2}}\sum_{{\bf z}\in(V)_2}\sum_{\substack{{\bf v}\in (V)_2\\{\bf v}\neq{\bf z}}}\big(f({\bf z})-f({\bf v})\big)^2\sum_{e:\,{\bf z},{\bf v}\in (e)_2}\frac{r_{e}}{|e|-1}\frac1{\binom{|e|}{2}}\notag,
\end{align}
which is clearly bounded by $12$ times~\eqref{eq:EIP2_1}.

Adding, we see that we can bound~\eqref{eq:compound} by $36\mathcal{E}^{\mathrm{IP}(2)}(f,f)$ and thus overall term~\eqref{eq:EQ2_1} is at most $44\mathcal{E}^{\mathrm{IP}(2)}(f,f)$. The same is clearly true for~\eqref{eq:EQ2_2}. 
\subsection{Bounding terms \eqref{eq:EQ2_3} and~\eqref{eq:EQ2_4}}
It remains to consider terms~\eqref{eq:EQ2_3} and~\eqref{eq:EQ2_4}. Term~\eqref{eq:EQ2_3} is 
\begin{align}
&\frac12\frac1{\binom{n}{2}}\sum_{{\bf x}\in(V)_2}\sum_{\substack{{\bf y}\in(V)_2\\{\bf y}\neq{\bf x}\\{\bf y}(2)={\bf x}(1)}}q^{\mathrm{Q}(2)}({\bf x},{\bf y})\big(f({\bf x})-f({\bf y})\big)^2\notag\\
&=\frac12\frac1{\binom{n}{2}}\sum_{{\bf x}\in(V)_2}\sum_{\substack{{\bf y}\in(V)_2\\{\bf y}\neq{\bf x}\\{\bf y}(2)={\bf x}(1)}}\big(f({\bf x})-f({\bf y})\big)^2
\sum_{e:\,{\bf x}\in(e)_2}\frac{r_e}{|e|}\sum_{e':{\bf y}\in(e')_2}\frac{r_{e'}}{2(|e'|-1)}\left(\sum_{\bar e:\,{\bf x}(1)\in\bar e}r_{\bar e}\right)^{-1}\label{eq:term3}.
\end{align}
 We again must manipulate term \eqref{eq:term3} a little. For each fixed ${\bf x}$, ${\bf y}$ appearing in the sums, we define ${\bf z}=({\bf y}(1),{\bf x}(2))$, and then we can upper-bound \eqref{eq:term3} by
\begin{align}
&\frac12\frac1{\binom{n}{2}}\sum_{{\bf x}\in(V)_2}\sum_{\substack{{\bf y}\in(V)_2\\{\bf y}\neq{\bf x}\\{\bf y}(2)={\bf x}(1)}}2\left[\big(f({\bf x})-f({\bf z})\big)^2+\big(f({\bf z})-f({\bf y})\big)^2\right]
\sum_{\substack{e:\,{\bf x}\in(e)_2}}\frac{r_e}{|e|}\sum_{e':{\bf y}\in(e')_2}\frac{r_{e'}}{2(|e'|-1)}\left(\sum_{\bar e:\,{\bf x}(1)\in\bar e}r_{\bar e}\right)^{-1}\notag\\
&=\frac12\frac1{\binom{n}{2}}\sum_{{\bf x}\in(V)_2}\sum_{\substack{{\bf z}\in(V)_2\\{\bf z}\neq{\bf x}\\{\bf z}(2)={\bf x}(2)}}2\big(f({\bf x})-f({\bf z})\big)^2
\sum_{\substack{e:\,{\bf x}\in(e)_2}}\frac{r_e}{|e|}\sum_{e':{\bf z}(1),{\bf x}(1)\in e'}\frac{r_{e'}}{2(|e'|-1)}\left(\sum_{\bar e:\,{\bf x}(1)\in\bar e}r_{\bar e}\right)^{-1}\notag\\
&\phantom{=}+\frac12\frac1{\binom{n}{2}}\sum_{{\bf z}\in(V)_2}\sum_{\substack{{\bf y}\in(V)_2\\{\bf y}\neq{\bf z}\\{\bf y}(1)={\bf z}(1)}}2\big(f({\bf z})-f({\bf y})\big)^2
\sum_{\substack{e:\,{\bf y}(2),{\bf z}(2)\in e}}\frac{r_e}{|e|}\sum_{e':{\bf y}\in(e')_2}\frac{r_{e'}}{2(|e'|-1)}\left(\sum_{\bar e:\,{\bf y}(2)\in\bar e}r_{\bar e}\right)^{-1}\notag
\end{align}
\begin{align}
&=\frac12\frac1{\binom{n}{2}}\sum_{{\bf x}\in(V)_2}\sum_{\substack{{\bf z}\in(V)_2\\{\bf z}\neq{\bf x}\\{\bf z}(2)={\bf x}(2)}}2\big(f({\bf x})-f({\bf z})\big)^2
\sum_{\substack{e:\,{\bf x}\in(e)_2}}\frac{r_e}{|e|}\sum_{\substack{e':{\bf z}(1),{\bf x}(1)\in e'\\{\bf z}(2)\in e'}}\frac{r_{e'}}{2(|e'|-1)}\left(\sum_{\bar e:\,{\bf x}(1)\in\bar e}r_{\bar e}\right)^{-1}\label{eq:term3pt1}\\
&\phantom{=}+\frac12\frac1{\binom{n}{2}}\sum_{{\bf x}\in(V)_2}\sum_{\substack{{\bf z}\in(V)_2\\{\bf z}\neq{\bf x}\\{\bf z}(2)={\bf x}(2)}}2\big(f({\bf x})-f({\bf z})\big)^2
\sum_{\substack{e:\,{\bf x}\in(e)_2}}\frac{r_e}{|e|}\sum_{\substack{e':{\bf z}(1),{\bf x}(1)\in e'\\{\bf z}(2)\notin e'}}\frac{r_{e'}}{2(|e'|-1)}\left(\sum_{\bar e:\,{\bf x}(1)\in\bar e}r_{\bar e}\right)^{-1}\label{eq:term3pt2}\\
&\phantom{=}+\frac12\frac1{\binom{n}{2}}\sum_{{\bf z}\in(V)_2}\sum_{\substack{{\bf y}\in(V)_2\\{\bf y}\neq{\bf z}\\{\bf y}(1)={\bf z}(1)}}2\big(f({\bf z})-f({\bf y})\big)^2
\sum_{\substack{e:\,{\bf y}(2),{\bf z}(2)\in e\\{\bf y}(1)\in e}}\frac{r_e}{|e|}\sum_{e':{\bf y}\in(e')_2}\frac{r_{e'}}{2(|e'|-1)}\left(\sum_{\bar e:\,{\bf y}(2)\in\bar e}r_{\bar e}\right)^{-1}\label{eq:term3pt3}\\
&\phantom{=}+\frac12\frac1{\binom{n}{2}}\sum_{{\bf z}\in(V)_2}\sum_{\substack{{\bf y}\in(V)_2\\{\bf y}\neq{\bf z}\\{\bf y}(1)={\bf z}(1)}}2\big(f({\bf z})-f({\bf y})\big)^2
\sum_{\substack{e:\,{\bf y}(2),{\bf z}(2)\in e\\{\bf y}(1)\notin e}}\frac{r_e}{|e|}\sum_{e':{\bf y}\in(e')_2}\frac{r_{e'}}{2(|e'|-1)}\left(\sum_{\bar e:\,{\bf y}(2)\in\bar e}r_{\bar e}\right)^{-1}.\label{eq:term3pt4}
\end{align}
Term~\eqref{eq:term3pt3} is at most~\eqref{eq:simple2} which we have already established is at most 8 times~\eqref{eq:EIP2_1}. By similar arguments we can obtain the same bound on~\eqref{eq:term3pt1}. Term~\eqref{eq:term3pt2} is at most~\eqref{eq:EIP2_3} and term~\eqref{eq:term3pt4} is at most~\eqref{eq:EIP2_2}. We deduce that~\eqref{eq:term3}, and hence~\eqref{eq:EQ2_3} is at most $16\mathcal{E}^{\mathrm{IP}(2)}(f,f)$. The same bound can be obtained for~\eqref{eq:EQ2_4}.

Putting the bounds on \eqref{eq:EQ2_1}-\eqref{eq:EQ2_4} together, we obtain that $\mathcal{E}^{\mathrm{Q}(2)}(f,f)\le 120\mathcal{E}^{\mathrm{IP}(2)}(f,f)$.

\nocite{}
\bibliographystyle{plain}
\bibliography{Exclusion}

\end{document}